\def\blfootnote{\gdef\@thefnmark{}\@footnotetext}
\theoremstyle{plain}
\newtheorem*{theorem*}{Theorem}
\newtheorem*{thma}{Theorem A}
\newtheorem*{thmb}{Theorem B}
\newtheorem{theorem}{Theorem}[section]
\newtheorem{lemma}[theorem]{Lemma}
\newtheorem{proposition}[theorem]{Proposition}
\theoremstyle{remark}
\theoremstyle{Acknowledgments}
\newtheorem*{cora}{Corollary A}
\newtheorem*{corb}{Corollary B}
\theoremstyle{definition}
\def\mod{{\rm Mod}}
\begin{document}
\blfootnote{\textup{2000} \textit{Mathematics Subject Classification}:
57N05, 20F38, 20F05}
\blfootnote{\textit{Keywords}:
Mapping class groups, nonorientable surfaces, twist subgroup, torsion, generating sets, commutators}
\newenvironment{prooff}{\medskip \par \noindent {\it Proof}\ }{\hfill
$\square$ \medskip \par}
    \def\sqr#1#2{{\vcenter{\hrule height.#2pt
        \hbox{\vrule width.#2pt height#1pt \kern#1pt
            \vrule width.#2pt}\hrule height.#2pt}}}
    \def\square{\mathchoice\sqr67\sqr67\sqr{2.1}6\sqr{1.5}6}
\def\pf#1{\medskip \par \noindent {\it #1.}\ }
\def\endpf{\hfill $\square$ \medskip \par}
\def\demo#1{\medskip \par \noindent {\it #1.}\ }
\def\enddemo{\medskip \par}
\def\qed{~\hfill$\square$}

 \title[ The Twist Subgroup is generated by two elements] {The Twist Subgroup is generated by two elements}

\author[T{\"{u}}l\.{i}n Altun{\"{o}}z,       Mehmetc\.{i}k Pamuk, and O\u{g}uz Y{\i}ld{\i}z ]{T{\"{u}}l\.{i}n Altun{\"{o}}z,    Mehmetc\.{i}k Pamuk, and O\u{g}uz Y{\i}ld{\i}z}

\address{Department of Mathematics, Middle East Technical University,
 Ankara, Turkey}
\email{atulin@metu.edu.tr}  \email{mpamuk@metu.edu.tr} \email{oguzyildiz16@gmail.com}


\begin{abstract}
We show that the twist subgroup $\mathcal{T}_g$ of a nonorientable surface of genus $g$ can be generated by two elements for every odd $g\geq27$ and even $g\geq42$. Using these generators, we can also show that  $\mathcal{T}_g$ can be generated by two or three commutators depending on $g$ modulo $4$. Moreover, we show that $\mathcal{T}_g$ can be generated by three elements if $g\geq 8$. For this general case, the number of commutator generators is either three or four depending on $g$ modulo $4$ again.
\end{abstract}
\maketitle
  \setcounter{secnumdepth}{2}
 \setcounter{section}{0}
\section{Introduction}

Let $N_g$ denote a closed connected nonorientable surface of genus $g$. The isotopy classes of self-diffeomorphisms of $N_g$ form a group, $\mod(N_g)$, called 
the mapping class group of  $N_g$. Compared to the corresponding group for the orientable surfaces, this group is much less studied. 
It is known that~\cite{l1,l2} $\mod(N_g)$  is generated by Dehn twists about two-sided simple closed curves and a so-called $Y$-homeomorphism (or a crosscap slide). 
Dehn twists about two-sided simple closed curves form an index $2$ subgroup $\mathcal{T}_g$ of $\mod(N_g)$, called the twist subgroup.

The study of algebraic properties of mapping class groups is an active one leading to interesting developments.
In this paper, we study the problem of finding generating sets for $\mathcal{T}_g$ with minimal number of elements. Previously, Du~\cite{du} for $g\geq5$ and odd, 
and the authors~\cite{apy1} for $g\geq13$ obtained generating sets for $\mathcal{T}_g$ with three elements.  Recently, Lesniak and Szepietowski \cite{ls} showed that for 
$g\neq 4$, $\mod(N_g)$ and $\mathcal{T}_g$ can be generated by three torsion elements.
Since the twist subgroup is not cyclic, any generating 
set must contain at least two elements. In this direction, modifying the techniques of Baykur and Korkmaz \cite{bk} for nonorientable surfaces, we obtain the following optimal generating 
set consisting of a finite order ($g$ or $g-1$) and an infinite order mapping classes (see Theorems~\ref{t29} and \ref{t42}):

\begin{thma}\label{t0}
The twist subgroup $\mathcal{T}_g$ of $\mod(N_g)$ is generated by two elements for every odd $g\geq27$ and even $g\geq42$.
\end{thma}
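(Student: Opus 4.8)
The plan is to adapt, to the nonorientable setting, the strategy Baykur and Korkmaz used for closed orientable surfaces: pin down a finite, highly symmetric generating set of $\mathcal{T}_g$ by Dehn twists, produce a single periodic mapping class $F$ that cyclically permutes the curves underlying this set, and then show that $F$ together with one further element recovers the whole set by conjugation. Concretely, I would first fix a convenient symmetric model of $N_g$ — a sphere with $g$ crosscaps arranged with a rotational symmetry, equivalently $\Sigma_h$ with one or two extra crosscaps according to the parity of $g$ — and recall a generating set of $\mathcal{T}_g$ consisting of Dehn twists about two-sided curves $c_1,\dots,c_{g-1}$ forming a chain, together with a bounded number of extra two-sided curves of the Lickorish/Chillingworth type (as refined by Stukow and Szepietowski, or as in \cite{apy1}). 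The list is chosen so that the ``extra'' twists can be rewritten as products of the chain twists via chain relations and lantern-type relations.

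The heart of the argument is to exhibit a mapping class $F\in\mathcal{T}_g$ of finite order — order $g$ when $g$ is odd and order $g-1$ when $g$ is even — realized as a rotation of $N_g$ in the symmetric model, such that $F$ carries $c_i$ to $c_{i+1}$ over the relevant range of indices, so that the $\langle F\rangle$-orbit of a single chain curve sweeps out all of $c_1,\dots,c_{g-1}$. Two points need checking here: that $F$ really has the claimed order, and that it lies in the index-$2$ twist subgroup rather than merely in $\mod(N_g)$ — the cleanest way being to write $F$ itself as an explicit product of Dehn twists, which is the content of Theorems~\ref{t29} and~\ref{t42}. The lower bounds $g\geq27$ (odd) and $g\geq42$ (even) should come from needing enough crosscaps for such a rotation to exist and for its orbit on a chosen curve to be long enough; the even case requires more room, hence the larger threshold.

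Next, let $x$ be either a single chain twist $t_{c_1}$ or a short product $t_{a}t_{b}^{\pm1}$ of twists about curves in the configuration, chosen so that the conjugates $F^{i}xF^{-i}$ reproduce all the chain twists together with enough of the auxiliary ones. Since $F^{i}t_{c}F^{-i}=t_{F^{i}(c)}$, the subgroup $\langle F,x\rangle$ contains $t_{c_1},\dots,t_{c_{g-1}}$, and the remaining generators from the first step are then obtained from these via the recorded relations (and, where convenient, as further $\langle F\rangle$-conjugates). Because $\mathcal{T}_g$ is generated by the twists in the first step, this yields $\langle F,x\rangle=\mathcal{T}_g$, and $\{F,x\}$ is the asserted two-element generating set, with $F$ of finite order $g$ or $g-1$ and $x$ of infinite order.

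I expect the crux to be the middle step together with the verification at the end: constructing the rotation $F$ with exactly the right order, certifying $F\in\mathcal{T}_g$ by an explicit Dehn-twist expression, and — most delicately — checking that the non-chain generators of $\mathcal{T}_g$ genuinely lie in the subgroup generated by the $F$-conjugates of $x$. That last point is a relation-chasing problem (chain relations, lantern relations, and the ``star'' relation) whose bookkeeping is parity-sensitive, which is presumably why the two parities are handled by separate constructions and why the thresholds on $g$ differ.
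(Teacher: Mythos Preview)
Your overall architecture matches the paper's: the first generator is indeed a rotation $T$ of order $g$ (odd case) or $g-1$ (even case), and membership $T\in\mathcal{T}_g$ is checked not by a Dehn-twist factorization but by the determinant criterion $D(T)=1$ of Lemma~\ref{lem1}. The second generator, however, is where your proposal has a genuine gap.

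You suggest taking $x$ to be a single twist or a product $t_a t_b^{\pm1}$ and recovering everything from the $T$-orbit together with chain, lantern, and star relations. This is too optimistic, and the paper does not use any of those relations. The actual second generator is a carefully spaced \emph{four}-fold product of twists, e.g.\ $\Gamma_{10}C_2^{-1}F_{18}C_{12}^{-1}$ in the odd case and $\Gamma_{10}C_2^{-1}F_{18}D_{33}^{-1}$ in the even case. The four curves are chosen pairwise disjoint and with specific index gaps so that, after conjugating by a suitable power of $T$ to get a second such product, one can conjugate one product by a combination of the two and alter exactly one of its four factors while fixing the other three. Iterating this ``peel off one factor at a time'' manoeuvre eventually isolates two-factor elements such as $C_2B_2^{-1}$ and $\Gamma_6C_2^{-1}$, and from there $A_1A_2^{-1}$ and $B_1B_2^{-1}$; one then invokes the earlier generating set of Theorem~\ref{t1} (not Omori's set directly) to obtain all chain twists, and finally gets $E$ via $E=A_1F_1A_1^{-1}$ once $F_1$ has been freed. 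None of this is relation-chasing in your sense; it is an explicit conjugation bootstrap that would collapse if the second generator had only one or two factors, since you would lack the ``spectator'' curves needed to keep part of the product fixed while modifying another part.

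Relatedly, your explanation of the thresholds is off. The rotation $T$ exists for all $g$; the bounds $g\geq27$ and $g\geq42$ arise because the indices $10$, $2$, $18$, $12$ (resp.\ $10$, $2$, $18$, $33$) and their various $T$-translates must label genuinely distinct, disjoint curves on $N_g$ for the bootstrap to go through. The even case needs the larger index $33$ (to accommodate the extra $d$-curve required by Theorem~\ref{t1}), hence the larger threshold.
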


The twist subgroup $\mathcal{T}_{g}$ admits an epimorphism to the automorphism group of $H_1(N_g;\mathbb{Z}_2)$ 
preserving the modulo $2$ intersection pairing~\cite{mpin}, which is isomorphic to (see~\cite{sz4} pp.338--339 )
\begin{eqnarray}
 \begin{cases} Sp(2h;\mathbb{Z}_2)&\text{if $g=2h+1$,} \\ 
 Sp(2h;\mathbb{Z}_2)\ltimes \mathbb{Z}_{2}^{2h+1}&\text{if $g=2h+2$.}  \end{cases}
\nonumber 
\end{eqnarray}
Hence, the action of mapping classes on $H_1(N_g;\mathbb{Z}_2)$ induces an epimorphism from $\mathcal{T}_g$ to 
the above groups, which immediately implies the following corollary:

\begin{cora}
For every odd $g\geq27$, the symplectic group $Sp\big( g-1; \mathbb{Z}_2\big)$ and for every even $g\geq42$, the 
symplectic group $Sp\big(g-2; \mathbb{Z}_2\big) \ltimes \mathbb{Z}_{2}^{g-1}$ is generated by two elements.
\end{cora}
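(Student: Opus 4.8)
The plan is to derive Corollary A directly from Theorem A by pushing the two generators forward along the homological representation described in the paragraph immediately preceding the corollary. First I would fix, for odd $g\geq 27$ (resp. even $g\geq 42$), the two elements $f_1,f_2\in\mathcal{T}_g$ furnished by Theorem A, so that $\mathcal{T}_g=\langle f_1,f_2\rangle$. Next I would invoke the epimorphism $\rho\colon \mathcal{T}_g\to \Aut\big(H_1(N_g;\mathbb{Z}_2)\big)$ onto the group of automorphisms preserving the mod $2$ intersection pairing: its surjectivity is the result of~\cite{mpin}, and the identification of its target with $Sp(2h;\mathbb{Z}_2)$ when $g=2h+1$ and with $Sp(2h;\mathbb{Z}_2)\ltimes\mathbb{Z}_2^{2h+1}$ when $g=2h+2$ is recorded in~\cite{sz4}.

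The only nontrivial input beyond these citations is the elementary observation that the image of a generating set under a surjective homomorphism generates the target. Applying this to $\rho$, the classes $\rho(f_1)$ and $\rho(f_2)$ generate the whole target group in each parity. It then remains only to translate the rank parameter $h$ back into $g$: for odd $g=2h+1\geq 27$ one has $2h=g-1$, so $Sp(g-1;\mathbb{Z}_2)=\langle \rho(f_1),\rho(f_2)\rangle$; for even $g=2h+2\geq 42$ one has $2h=g-2$ and $2h+1=g-1$, so $Sp(g-2;\mathbb{Z}_2)\ltimes\mathbb{Z}_2^{g-1}=\langle \rho(f_1),\rho(f_2)\rangle$.

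I do not expect a genuine obstacle here: the corollary is a formal consequence of Theorem A once surjectivity of $\rho$ and the identification of its target are taken as given, both being quoted from the literature. The only points demanding any care are bookkeeping ones — carrying the genus ranges ``$g\geq 27$ odd'' and ``$g\geq 42$ even'' over verbatim from Theorem A, and performing the index substitution $h\mapsto g$ correctly in each of the two parity cases.
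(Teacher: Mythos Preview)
Your proposal is correct and matches the paper's approach exactly: the paper simply states that the epimorphism from $\mathcal{T}_g$ to the indicated groups ``immediately implies'' the corollary, and your argument spells out precisely this push-forward of the two generators from Theorem~A. There is nothing to add.
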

If one wants to decrease $g$, then the number of generators increases. In this direction, we have shown that for $g\geq 9$, the twist subgroup $\mathcal{T}_g$ is generated by three elements (see Theorems~\ref{t9odd} and~\ref{t8even}). Similar results can also be obtained for the corresponding symplectic groups.

The twist subgroup $\mathcal{T}_g$ is perfect if $g\geq7$~\cite{mk3,mk4}.  As also noted in \cite{bk}, it is interesting to know for which perfect groups 
the minimal number of generators is equal to the minimal number of commutator generators. For a closed connected orientable surface of genus $g$, 
which is a perfect group for $g\geq 3$, Baykur and Korkmaz \cite{bk}, showed that the mapping class group can be generated by two  commutators 
if $g\geq 5$, and by three commutators if $g\geq 3$.  

In the nonorientable case for the twist subgroup,  we obtain the following result (see Theorem~\ref{tcom}):
\begin{thmb}\label{t}
The twist subgroup $\mathcal{T}_g$ of $\mod(N_g)$
is generated by
\begin{enumerate}
\item[(1)] two commutators if $g=4k\geq44$ or $g=4k+1\geq29$ and
\item[(2)] three commutators if $g=4k+2\geq30$ or $g=4k+3\geq43$.
\end{enumerate}
\end{thmb}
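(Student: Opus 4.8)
The plan is to bootstrap from the generating sets already established: for the genera covered by Theorem~A we take the pair consisting of the finite-order element $\rho$ (of order $g$ or $g-1$) and the infinite-order element $\psi$ from Theorems~\ref{t29} and~\ref{t42}, and for the three exceptional genera $g=30,34,38$ in case (2), which lie below the range of Theorem~A, we instead take the three generators supplied by Theorem~\ref{t8even}. Two elementary facts drive the argument. First, if $\mathcal{T}_g=\langle\rho,\psi\rangle$ then also $\mathcal{T}_g=\langle\rho,\rho^{\,i}\psi^{\pm1}\rho^{\,j}\rangle$ for every $i,j\in\mathbb{Z}$, since the original $\psi$ is recovered from the new pair; so the second generator may be replaced by whichever translate is most convenient. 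Second, if $\mathcal{T}_g=\langle u,v\rangle$ with $u$ a product of $p$ commutators and $v$ a product of $q$ commutators, then $\mathcal{T}_g$ is generated by those $p+q$ commutators, because the subgroup they generate already contains $u$ and $v$. Thus it suffices to realize $\rho$ (or a chosen translate of it) and a chosen translate of $\psi$ as short products of commutators of elements of $\mathcal{T}_g$.

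The engine for producing commutators is the change-of-coordinates principle on $N_g$: if $a$ and $b$ are two-sided simple closed curves with homeomorphic complements then $b=h(a)$ for some $h\in\mathcal{T}_g$, whence $t_bt_a^{-1}=ht_ah^{-1}t_a^{-1}=[h,t_a]$ is a single commutator. Hence any element of $\mathcal{T}_g$ that can be written as a product of $2m$ Dehn twists grouped into $m$ adjacent pairs of the form $t_bt_a^{-1}$ (opposite exponents, curves in a common $\mathcal{T}_g$-orbit) is a product of $m$ commutators; and when two consecutive such commutators can be amalgamated into one by an ambient relation, the count drops further. The first task is to run this on $\rho$: viewing $\rho$ as a rotation of $N_g$, a suitable chain relation writes $\rho$ as a product of Dehn twists about a chain $c_1,\dots,c_r$ of two-sided curves, which I would then reorganize into orbit-pairs and, where the lantern and chain relations permit, combine, so as to exhibit $\rho$ as a single commutator. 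Here is precisely where the residue of $g$ modulo $4$ first enters, through the length $r$ of the chain and the parity of the number of twists one is left to pair up.

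Finally I would select the translate $\rho^{\,i}\psi^{\pm1}\rho^{\,j}$ of the infinite-order generator that, after rewriting via the lantern and chain relations, decomposes into adjacent orbit-pairs, and show that this costs one commutator when $g\equiv 0,1\pmod 4$ and two commutators when $g\equiv 2,3\pmod 4$; together with the single commutator from $\rho$ this yields the asserted totals of $2$ and $3$. For $g=30,34,38$ I would apply the same pairing and amalgamation argument to each of the three generators of Theorem~\ref{t8even}, arranging that they contribute three commutators in all. The hard part will be the bookkeeping rather than any new idea: one must choose the explicit Dehn-twist words for $\rho$ and for the second generator so that \emph{every} factor genuinely pairs with a twist about a curve in the same $\mathcal{T}_g$-orbit, and so that exactly the right number of amalgamations succeed --- in particular proving that for $g\equiv 0,1\pmod4$ one can genuinely squeeze the second generator down from two commutators to one, which is the step that forces the case division by $g$ modulo $4$.
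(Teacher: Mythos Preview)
Your plan misidentifies the source of the mod-$4$ case split and leaves the main step unproved. In the paper, the non-rotational generators are \emph{always} single commutators: an element of the form $XY^{-1}ZW^{-1}$ with $X,Y,Z,W$ Dehn twists about pairwise disjoint curves such that some $\phi\in\mathcal{T}_g$ carries $(x,z)$ to $(w,y)$ equals $[XZ,\phi]$. So your claim that the infinite-order generator ``costs two commutators when $g\equiv 2,3\pmod 4$'' is unsupported --- for the generators of Theorems~\ref{t29} and~\ref{t42} it costs one commutator regardless of $g\bmod 4$. The actual obstruction lies in $T$. The paper does not use a chain relation at all; it writes $T=\rho_2\rho_1$ for two explicit reflections with $\rho_2=T^{m}\rho_1T^{-m}$, whence $T=[T^{m},\rho_1]$ (Proposition~\ref{prop}). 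The catch is that $\rho_1,\rho_2$ lie in $\mathcal{T}_g$ (i.e.\ have $D(\rho_i)=1$) only when $g\equiv 0,1\pmod 4$. For $g\equiv 2,3\pmod 4$ the paper therefore replaces $T$ by a \emph{different} rotation of smaller order whose reflections do lie in $\mathcal{T}_g$, and then proves new three-element generating sets (Theorems~\ref{t4k+2} and~\ref{t4k+3}) adapted to this new $T$. Each of the three generators is then a single commutator; that is why the count is $3$ rather than $2$.

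Your proposed route --- write the standard $T$ as a product of Dehn twists via a chain relation and regroup into orbit-pairs to get a single commutator --- would, if it worked uniformly, actually beat the paper's result for $g\equiv 2,3$. But you give no argument that such a regrouping terminates in one commutator, and the reflection argument that does work is unavailable precisely in these cases. Likewise, for $g=30,34,38$ invoking Theorem~\ref{t8even} does not help: that theorem uses the standard even $T$ of Figure~\ref{TE}, which is exactly the rotation you cannot exhibit as $[\,\cdot\,,\,\cdot\,]$ by the reflection trick when $g\equiv 2\pmod 4$. To repair the argument you should either carry out the paper's reflection analysis (computing $D(\rho_i)$ and switching rotations accordingly), or give a genuine proof --- not a sketch --- that the standard $T$ is a single commutator in $\mathcal{T}_g$ for all the residues you need.
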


Also for this case, we have the following corollary for symplectic groups: 
\begin{corb} The symplectic group $Sp\big(g-2; \mathbb{Z}_2\big) \ltimes \mathbb{Z}_{2}^{g-1}$ is generated by 
 \begin{itemize}
\item[(1)] two commutators if $g=4k\geq44$ and 
\item[(2)] three commutators if $g=4k+2\geq30$.
 \end{itemize}
 The symplectic group $Sp\big(g-1; \mathbb{Z}_2\big)$ is generated by
 \begin{itemize}
\item[(1)] two commutators if $g=4k+1\geq29$ and 
\item[(2)] three commutators if $g=4k+3\geq43$.
 \end{itemize}
\end{corb}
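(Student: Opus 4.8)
The plan is to obtain Corollary~B as an immediate push-forward of Theorem~B along the epimorphism described just after Theorem~A. Recall that the action of mapping classes on $H_1(N_g;\mathbb{Z}_2)$ preserving the mod~$2$ intersection pairing restricts, on the twist subgroup, to a surjective homomorphism
\[
\psi_g\colon \mathcal{T}_g \twoheadrightarrow \begin{cases} Sp(g-1;\mathbb{Z}_2), & g \text{ odd},\\ Sp(g-2;\mathbb{Z}_2)\ltimes\mathbb{Z}_{2}^{g-1}, & g \text{ even},\end{cases}
\]
by the results of \cite{mpin} together with the identification of $\Aut\big(H_1(N_g;\mathbb{Z}_2),\,\cdot\,\big)$ recorded in \cite{sz4}. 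So the only thing to do is to transport a commutator generating set of $\mathcal{T}_g$, supplied by Theorem~B, through $\psi_g$.

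First I would record the two elementary facts that make this work: a group homomorphism $\phi$ satisfies $\phi([a,b])=[\phi(a),\phi(b)]$, so it carries commutators to commutators; and a surjective homomorphism carries a generating set of its domain onto a generating set of its codomain. Hence if $\mathcal{T}_g=\langle c_1,\dots,c_n\rangle$ with each $c_i$ a commutator in $\mathcal{T}_g$, then $\psi_g(c_1),\dots,\psi_g(c_n)$ are $n$ commutators in the target group that generate it.

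It then remains only to sort the cases of Theorem~B according to the parity of $g$. Theorem~B(1) gives $\mathcal{T}_g=\langle c_1,c_2\rangle$ with $c_1,c_2$ commutators whenever $g=4k\geq 44$ (even, so the target of $\psi_g$ is $Sp(g-2;\mathbb{Z}_2)\ltimes\mathbb{Z}_{2}^{g-1}$) or $g=4k+1\geq 29$ (odd, target $Sp(g-1;\mathbb{Z}_2)$); applying $\psi_g$ yields exactly the two-commutator statements of Corollary~B. Likewise Theorem~B(2) gives a three-commutator generating set of $\mathcal{T}_g$ for $g=4k+2\geq 30$ (even) and for $g=4k+3\geq 43$ (odd), and applying $\psi_g$ yields the corresponding three-commutator statements. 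This is precisely Corollary~B. There is no genuine obstacle in this argument: all the work is contained in Theorem~B, and the only point requiring a little care is to quote correctly the identification of $\psi_g(\mathcal{T}_g)=\mathcal{T}_g/\ker\psi_g$ with the stated symplectic, respectively symplectic-by-elementary-abelian, group — which is already invoked in the introduction.
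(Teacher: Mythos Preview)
Your proposal is correct and follows exactly the approach the paper intends: Corollary~B is stated without proof as an immediate consequence of Theorem~B via the epimorphism $\psi_g$ described in the introduction, and your write-up just makes explicit the two trivial observations (surjections send generating sets to generating sets, and homomorphisms send commutators to commutators) that justify this.
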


If one drops $g$ to $7$, then the number of commutator generators are $1$ more than the corresponding cases in Theorem B (see Theorem~\ref{tcom1}). Moreover, one can obtain similar results for the corresponding symplectic groups.

Before we finish this section, we want to point out that throughout the paper we look for generators for $\mathcal{T}_g$ which can be expressed as a commutator.

\medskip

\noindent
{\bf Acknowledgements.}
 The authors thank \.{I}nan\c{c} Baykur for his valuable comments on an earlier version of this paper. The first author was partially supported by the Scientific and 
Technological Research Council of Turkey (T\"{U}B\.{I}TAK)[grant number 120F118].


\par  
\section{Background and Results on Mapping Class Groups} \label{S2}
 Let $N_g$ denote a closed connected nonorientable surface of genus $g$. The {\textit{mapping class group}} 
 $\mod(N_g)$ of the surface $N_g$ is defined to be the group of the isotopy classes of 
 all self-diffeomorphisms of $N_g$. Throughout the paper we consider
 diffeomorphisms up to isotopy. For the composition of two diffeomorphisms, we
use the functional notation; if $g$ and $h$ are two diffeomorphisms, 
the composition $gh$ means that $h$ is applied first.\\
\indent
A simple closed curve on $N_g$ is called
\textit{one-sided} if a regular neighbourhood of it is homeomorphic to 
a M\"{o}bius band. It is called \textit{two-sided} if a regular neighbourhood of 
it is homeomorphic to an annulus. If $a$ is a two-sided simple closed 
curve on $N_g$, to define the Dehn twist $t_a$, we need to fix one of two possible 
orientations on a regular neighbourhood of $a$ (as we did for the 
curves in Figure~\ref{G}). We will denote the right-handed Dehn twist $t_a$ about $a$ by the corresponding capital 
letter $A$.

Now, let us recall the following basic properties of Dehn twists which we use frequently in the remaining of the paper. Let $a$ and $b$ be 
two-sided simple closed curves on $N_g$ and $f\in \mod(N_g)$.
\begin{itemize}
\item \textbf{Commutativity:} If $a$ and $b$ are disjoint, then $AB=BA$.
\item \textbf{Conjugation:} If $f(a)=b$, then $fAf^{-1}=B^{s}$, where $s=\pm 1$ 
depending on whether $f$ is orientation preserving or orientation reversing on a 
neighbourhood of $a$ with respect to the chosen orientation.
\end{itemize}

We use conjugation property repeatedly throughout the paper. To avoid too much repetition we want to remind the reader the following simple algebraic property:
Let $G$ be a subset of $\mod(N_g)$ and $f \in G$. If $f$ takes the curve $a$ to $b$, that is $f(a)=b$ and the Dehn twist $A$ belongs to $G$, then the Dehn twist $B$ also belongs to $G$.

\begin{figure}[h]
\begin{center}
\scalebox{0.35}{\includegraphics{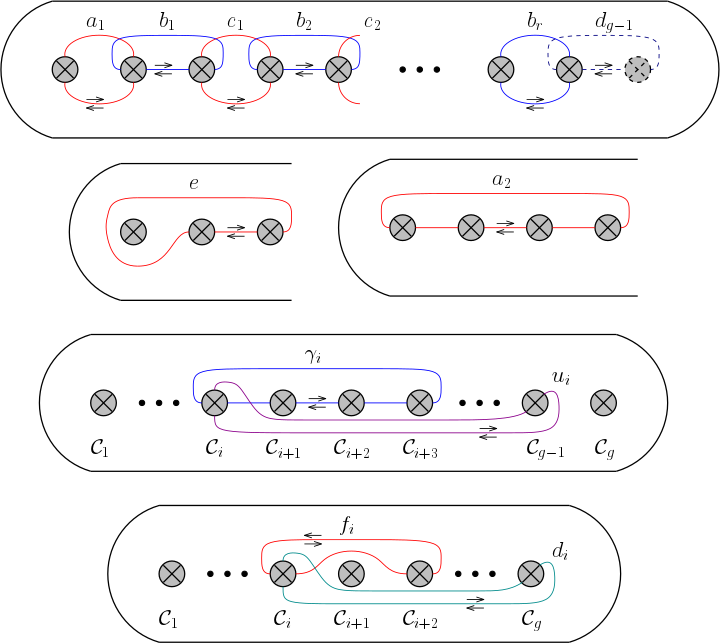}}
\caption{The curves $a_1,a_2,b_i,c_i,e,\gamma_i,u_i,f_i$ and $d_i$ on the surface $N_g$.}
\label{G}
\end{center}
\end{figure}
\par

\begin{figure}[h]
\begin{center}
\scalebox{0.14}{\includegraphics{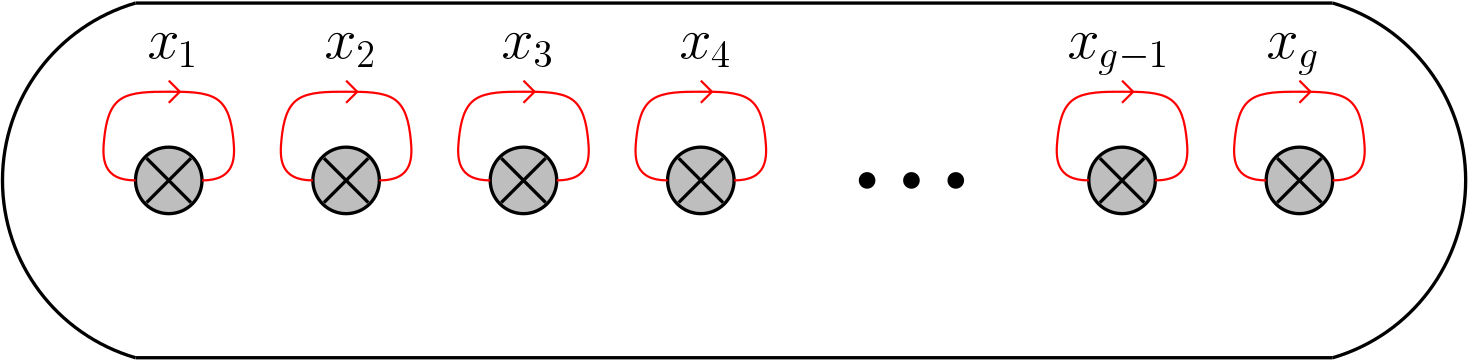}}
\caption{Generators of $H_1(N_g;\mathbb{R})$.}
\label{H}
\end{center}
\end{figure}

Consider the surface $N_g$ as in Figure~\ref{G}.
The Dehn twist generators given by Omori are as follows (note that we do not have the curve $d_{g-1}$ when $g$ is odd).
\begin{theorem}\cite{om}\label{thm1}
The twist subgroup $\mathcal{T}_g$ is generated by the following $(g+1)$ Dehn twists
\begin{enumerate}
\item $A_1,A_2,B_1,\ldots, B_r$, $C_1,\ldots, C_{r-1}$ and $E$ if $g=2r+1$ and 
\item $A_1,A_2,B_1,\ldots, B_r$, $C_1,\ldots, C_{r-1}$, $D_{g-1}$ and $E$ if $g=2r+2$.
\end{enumerate}
\end{theorem}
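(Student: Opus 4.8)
The plan is to show that the subgroup $G \le \mathcal{T}_g$ generated by the $(g+1)$ Dehn twists listed in the statement is all of $\mathcal{T}_g$. The strategy is the familiar one: start from a known finite generating set of $\mathcal{T}_g$ by Dehn twists about two-sided simple closed curves, and prove that the twist about each of these curves belongs to $G$. Once the twist $X$ about a curve $x$ is known to lie in $G$, the twist about $\phi(x)$ lies in $G$ for every $\phi \in G$, by the conjugation property recalled above; so the argument reduces to exhibiting, for each target curve $y$, an element $\phi \in G$ with $\phi(x) = y$ for some curve $x$ whose twist already lies in $G$. The only algebraic input beyond commutativity and conjugation is the standard chain relations, which relate the twists about the curves of a linear chain to a twist about the boundary of a regular neighbourhood of that chain, and the lantern relation.

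First I would handle the orientable part of $N_g$. The curves $a_1, a_2$ and $b_1,\dots,b_r,c_1,\dots,c_{r-1}$ lie in an orientable subsurface $\Sigma \subset N_g$ and form configurations of chains; applying chain relations to consecutive subcollections puts into $G$ the Dehn twists about the boundary curves of regular neighbourhoods of these chains, and combining these with the $B_i$ and $C_i$ in lantern relations yields twists about further curves of $\Sigma$. Iterating, $G$ contains a finite generating set of the mapping class group of $\Sigma$, hence, by the conjugation trick, the Dehn twist about every two-sided simple closed curve contained in $\Sigma$. Next, the twist $E$ (and, in the even case, $D_{g-1}$) together with twists about curves obtained from those of $\Sigma$ by conjugating with elements already in $G$ allow one to move curves across the crosscaps of $N_g$: one checks that appropriate products of the listed generators map curves of $\Sigma$ onto each of the remaining curves in the chosen generating set of $\mathcal{T}_g$, so their twists lie in $G$ as well, and the theorem follows.

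The extra generator $D_{g-1}$ in the even case $g = 2r+2$ is needed to reach the curves near the last pair of crosscaps, which are invisible to the odd-case configuration; concretely, it is reflected in the extra summand $\mathbb{Z}_2^{g-1}$ in the homological image of $\mathcal{T}_g$, so the crosscap-crossing step must be run once more using $D_{g-1}$ to realize the final curves of the generating set. The main obstacle, and the bulk of the proof, is precisely this middle step: one must write down the explicit products of the generators that realize the required self-homeomorphisms of $N_g$ and verify on a picture (cf. Figure~\ref{G}) that they send the named curves where claimed, while keeping careful track of orientations so that every conjugation produces a positive Dehn twist. This bookkeeping is routine but delicate, and it is where the particular placement of the curves $a_i, b_i, c_i, e, d_i$ in Figure~\ref{G} is used.
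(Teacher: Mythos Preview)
The paper does not prove this theorem; it is quoted from Omori~\cite{om} as background and used as a black box. There is therefore no ``paper's own proof'' to compare your proposal against.

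As for your sketch itself: it is a plausible outline but not close to a proof. The key difficulty in reducing generating sets for $\mathcal{T}_g$ is not the orientable subsurface $\Sigma$---the Humphries-type argument there is standard---but rather showing that the single twist $E$ (together with $D_{g-1}$ in the even case) suffices to account for all the ``crosscap-crossing'' generators that appear in earlier generating sets (e.g.\ Chillingworth's or Stukow's). Your proposal asserts that ``appropriate products of the listed generators map curves of $\Sigma$ onto each of the remaining curves,'' but this is exactly the nontrivial content of the theorem, and you give no indication of what those products are or why they exist. Omori's actual proof does not proceed this way: he starts from Stukow's finite presentation of $\mathcal{T}_g$ and uses the relations there to eliminate generators one by one, which is a more algebraic and more controlled approach than the curve-chasing you describe. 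If you want to write a direct proof along your lines, you would need to specify a concrete prior generating set (say Stukow's or Szepietowski's), list its generators that are not already among $A_1,A_2,B_i,C_j,E,(D_{g-1})$, and for each one exhibit an explicit word in $G$ carrying a known curve to it; the chain and lantern relations you mention play at most a minor role in that step.
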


\noindent
Consider a basis $\lbrace x_1, x_2. \ldots, x_{g-1}\rbrace$ for $H_1(N_g; \mathbb{R})$ 
such that the curves $x_i$ are one-sided and disjoint as in Figure~\ref{H}. It is known that every
 diffeomorphism $f: N_g \to N_g$ induces a linear map 
 $f_{\ast}: H_1(N_g;\mathbb{R}) \to H_1(N_g;\mathbb{R})$. Therefore, one can
  define a homomorphism $D: \mod(N_g) \to \mathbb{Z}_{2}$ by $D(f)=\textrm{det}(f_{\ast})$. 
  The following lemma from~\cite{l1} tells when a mapping class falls into the twist subgroup $\mathcal{T}_g$.

\begin{lemma}\label{lem1} Let $f\in  \mod(N_g)$. Then  $D(f)=1$ if $f\in \mathcal{T}_g$ and
$D(f)=-1$ if $f \not \in \mathcal{T}_g$.
\end{lemma}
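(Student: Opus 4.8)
The plan is to show that $\mathcal{T}_g$ is exactly the kernel of $D$; under the identification of the order-two group $\{+1,-1\}$ with $\mathbb{Z}_2$ this is precisely the two assertions of the lemma. I would assemble the argument from three ingredients: (a) $D$ is trivial on $\mathcal{T}_g$; (b) $D$ is nontrivial; (c) $\mathcal{T}_g$ has index $2$ in $\mod(N_g)$, which is Lickorish's theorem already recalled in the introduction (\cite{l1,l2}).

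For (a), since Dehn twists about two-sided simple closed curves generate $\mathcal{T}_g$, it suffices to check $D(t_a)=1$ for such a curve $a$. The twist $t_a$ acts on $H_1(N_g;\mathbb{R})$ by a transvection-type formula $x\mapsto x+\phi_a(x)\,[a]$ for a suitable linear functional $\phi_a\in H_1(N_g;\mathbb{R})^{*}$; this is the usual description of the homological action of a Dehn twist, and it is valid here because a two-sided curve has an oriented annular neighbourhood. Because $t_a$ carries the oriented curve $a$ to itself, $(t_a)_*$ fixes $[a]$, and this forces $\phi_a([a])=0$ (trivially if $[a]=0$, and otherwise from $[a]=(t_a)_*[a]=(1+\phi_a([a]))[a]$). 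By the matrix determinant lemma, $\det\big((t_a)_*\big)=1+\phi_a([a])=1$, i.e. $D(t_a)=1$. Hence $\mathcal{T}_g\subseteq\ker D$.

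For (b), I would exhibit a single mapping class with determinant $-1$, namely a crosscap slide (Y-homeomorphism) $Y$ sliding a one-sided curve $\mu$ along a two-sided curve. Working in the Klein-bottle-with-hole supporting $Y$, one has $Y_*[\mu]=-[\mu]$, while the homology classes coming from the rest of the surface are fixed; taking $\mu$ to be one of the standard one-sided generators $x_i$ of $H_1(N_g;\mathbb{R})$, the matrix of $Y_*$ is block-triangular with a single $-1$ on the diagonal, so $\det(Y_*)=-1$ and $D(Y)=-1$. (Alternatively one may simply take any mapping class lying outside $\mathcal{T}_g$, using that $\mod(N_g)$ is generated by two-sided Dehn twists together with such a $Y$.)

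Finally, combining (a)--(c): $\ker D$ is a subgroup with $\mathcal{T}_g\subseteq\ker D\subsetneq\mod(N_g)$ by (a) and (b), while $\mathcal{T}_g$ has index $2$ by (c); hence $\ker D=\mathcal{T}_g$. Thus $D(f)=1$ for $f\in\mathcal{T}_g$, and for $f\notin\mathcal{T}_g$ we may write $f=Yh$ with $h\in\mathcal{T}_g$, so $D(f)=D(Y)D(h)=-1$, as claimed. I expect the only genuinely delicate point to be the homology computation for the crosscap slide in step (b) --- keeping careful track of co-orientations and of which classes remain nonzero in $H_1(N_g;\mathbb{R})$; step (a), while requiring a little care about the annular neighbourhood, is essentially the standard observation that two-sided Dehn twists act unipotently on homology.
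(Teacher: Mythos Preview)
The paper does not supply its own proof of this lemma; it is quoted directly from Lickorish~\cite{l1}. Your argument is the standard one and is correct.

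One refinement for step~(b), where you rightly flag the delicate point: the Klein-bottle-with-a-hole $K$ contributes a rank-$2$ block to $H_1(N_g;\mathbb{R})$, spanned by the two crosscap cores $x_1=\mu$ and $x_2$, so to get the determinant you need both diagonal entries of that block. The clean way to see the second one is to note that $Y$ fixes the boundary $\partial K$ pointwise, hence $Y_*$ fixes $[\partial K]=2x_1+2x_2$; combined with $Y_*(x_1)=-x_1$ this forces $Y_*(x_2)=2x_1+x_2$. The resulting $2\times2$ block $\left(\begin{smallmatrix}-1&2\\0&1\end{smallmatrix}\right)$ is indeed triangular with a single $-1$ on the diagonal, confirming $\det(Y_*)=-1$ as you claimed.
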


Before we finish this section, let us recall a generating set for $\mathcal{T}_g$ given in~\cite{apy1}. The diffeomorphism $T$ is the rotation by $\frac{2\pi}{g}$
or $\frac{2\pi}{g-1}$ for $g$ is odd or even as shown in Figures~\ref{TO} and ~\ref{TE}, respectively. 

\begin{theorem}\label{t1}
The twist subgroup $\mathcal{T}_g$ is generated by the elements 
\begin{enumerate}
\item $T,A_1A_{2}^{-1},B_1B_{2}^{-1}$ and $E$ if $g=2r+1$ and $r\geq 3$,
\item $T,A_1A_{2}^{-1},B_1B_{2}^{-1}, D_{g-1}$ and $E$ if $g=2r+2$ and $r\geq 3$.
\end{enumerate}
 \end{theorem}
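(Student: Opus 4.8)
The plan is to show that the subgroup $G\le\mathcal{T}_g$ generated by the four (resp.\ five) listed elements already contains the Dehn twists $A_1,A_2,B_1,\ldots,B_r,C_1,\ldots,C_{r-1},E$ (and $D_{g-1}$ in the even case) that generate $\mathcal{T}_g$ by Theorem~\ref{thm1}; this forces $G=\mathcal{T}_g$, and Lemma~\ref{lem1} shows there is no determinant obstruction to worry about. Since $E$ and $T$ lie in $G$, so does every conjugate $T^kET^{-k}$, and by the Conjugation property each of these equals $E_k^{\pm 1}$, the Dehn twist about the curve $T^k(e)$. The first thing I would do is read off from Figure~\ref{G} and from the pictures of the rotation $T$ exactly which curves make up the $T$-orbit of $e$: the rotation is set up precisely so that this orbit sweeps across the surface and meets each of the chains $\{a_1,a_2\}$, $\{b_i\}$, and $\{c_i\}$.

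Next I would use conjugation by powers of $T$ to spread the two ``difference'' generators. The elements $T^k(A_1A_2^{-1})T^{-k}$ and $T^k(B_1B_2^{-1})T^{-k}$ yield, up to inversion and a relabelling dictated by how $T$ permutes curves, an entire family of products of consecutive twists along the $A$-chain and the $B$-chain. Such a chain of ratios telescopes: the moment $G$ is known to contain a single honest Dehn twist somewhere in one of these chains --- an \emph{anchor} --- it contains every twist in that chain, via an identity of the form $A_i=(A_iA_{i+1}^{-1})(A_{i+1}A_{i+2}^{-1})\cdots A_j$. The anchor comes from the previous step: some translate $T^k(e)$ is isotopic to (a curve carried by an element of $G$ onto) one of the $A_i$ or $B_i$, and where a direct identification is not available one instead uses a braid-type relation between $E$ and a twist about a curve meeting $e$ once to conjugate $E$, inside $G$, onto the desired curve.

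Having placed all the $A_i$ and $B_i$ in $G$, the remaining generators $C_1,\ldots,C_{r-1}$ (and $D_{g-1}$) follow from the elementary principle recalled in Section~\ref{S2}: each $c_i$ is the image of a curve we already control under an ambient diffeomorphism that can be written as a word in Dehn twists now known to lie in $G$, so the Conjugation property puts $C_i\in G$. At that point $G$ contains Omori's generating set, hence $G=\mathcal{T}_g$; the even case is the same after the additional check that $D_{g-1}$ is reached this way.

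The hard part will not be the group theory --- the conjugation-and-telescoping argument is routine --- but the geometric bookkeeping that underpins it: verifying against the explicit figures that the $T$-orbit of $e$ really does reach all three chains, and, crucially, locating at least one usable braid or chain relation among the curves available at each stage so that a single Dehn twist can be split off from the products $A_1A_2^{-1}$ and $B_1B_2^{-1}$. I expect the bound $r\ge 3$ (that is, $g\ge 7$ in the odd case and $g\ge 8$ in the even case) to enter exactly here, as the smallest genus for which the chains are long enough and the orbit of $e$ is rich enough for these relations to be present.
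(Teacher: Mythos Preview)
First, note that this theorem is \emph{not} proved in the present paper: it is quoted from~\cite{apy1}, and the paper only ever invokes ``the proof of Theorem~\ref{t1}'' as a black box. What that black box does, as you can see from how it is called in Theorems~\ref{t29}--\ref{t4k+3.7}, is the following: once a subgroup is known to contain $T$, $A_1A_2^{-1}$, and $B_1B_2^{-1}$, it already contains every $A_1,A_2,B_i,C_j$. The generator $E$ plays no role whatsoever in that extraction; it is simply one more Omori generator, recovered afterwards (in fact always via $E=A_1F_1A_1^{-1}$ once $A_1$ and $F_1$ are in hand).

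Your plan therefore misidentifies the anchor. You propose to obtain a first honest Dehn twist from the $T$-orbit of $e$, but on this surface that orbit does not meet the chain $\{a_1,b_i,c_j\}$: the curve $e$ (and its relatives $f_i$) lives near the crosscap, and $T$ carries it around to other $f$-type curves, never to a chain curve. Your fallback---conjugate $E$ by a twist about a curve meeting $e$ once---needs a single Dehn twist such as $A_1$ to perform, and that is precisely what you do not yet have; using $A_1A_2^{-1}$ in its place (with $a_2$ disjoint from $e$) only produces $F_1$, which is again not a chain curve. So the circularity you worried about in your last paragraph is real, and the argument as written does not close.

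The mechanism actually used in~\cite{apy1} is visible in miniature throughout Section~\ref{S3} of this paper: one conjugates products by \emph{other products} already in the subgroup. The point is that $T$ has two relevant orbits of curves---the chain orbit of $a_1$ (containing all $b_i,c_j$) and the orbit of $a_2=\gamma_1$ (the $\gamma_i$)---and conjugating $A_1A_2^{-1}$ and $B_1B_2^{-1}$ by powers of $T$ populates both. When a product such as $P=XY^{-1}$ and a product $Q=ZW^{-1}$ have $X=Z$ intersecting once while the remaining curves are pairwise disjoint, the conjugate $(PQ^{-1})P(PQ^{-1})^{-1}$ replaces $X$ by another curve, and iterating this ``slide'' eventually collapses a product to a single twist. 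That is how the anchor is produced, entirely inside the chain/$\gamma$ world and without touching $E$; the bound $r\ge 3$ enters so that the two orbits are long enough for these slides to be available.
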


\section{A generating set for $\mathcal{T}_g$}\label{S3}
In the first part of this section, where the genus of the surface is odd, we refer to Figure~\ref{TO}. Note that $T(b_i)=c_i$, $T(c_j)=b_{j+1}$ for $i,j=1,\ldots,r-1$, and $T^{2}(b_r)=a_1$ and $T(a_1)=(b_1)$. Note that the rotation $T$ satisfies $D(T)=1$, which implies that $T$ belongs to $\mathcal{T}_g$. Also, let $\Gamma_{i}$ denote the right handed Dehn twist about the curve $\gamma_i$ shown in Figure~\ref{G}.
\begin{figure}[h]
\begin{center}
\scalebox{0.22}{\includegraphics{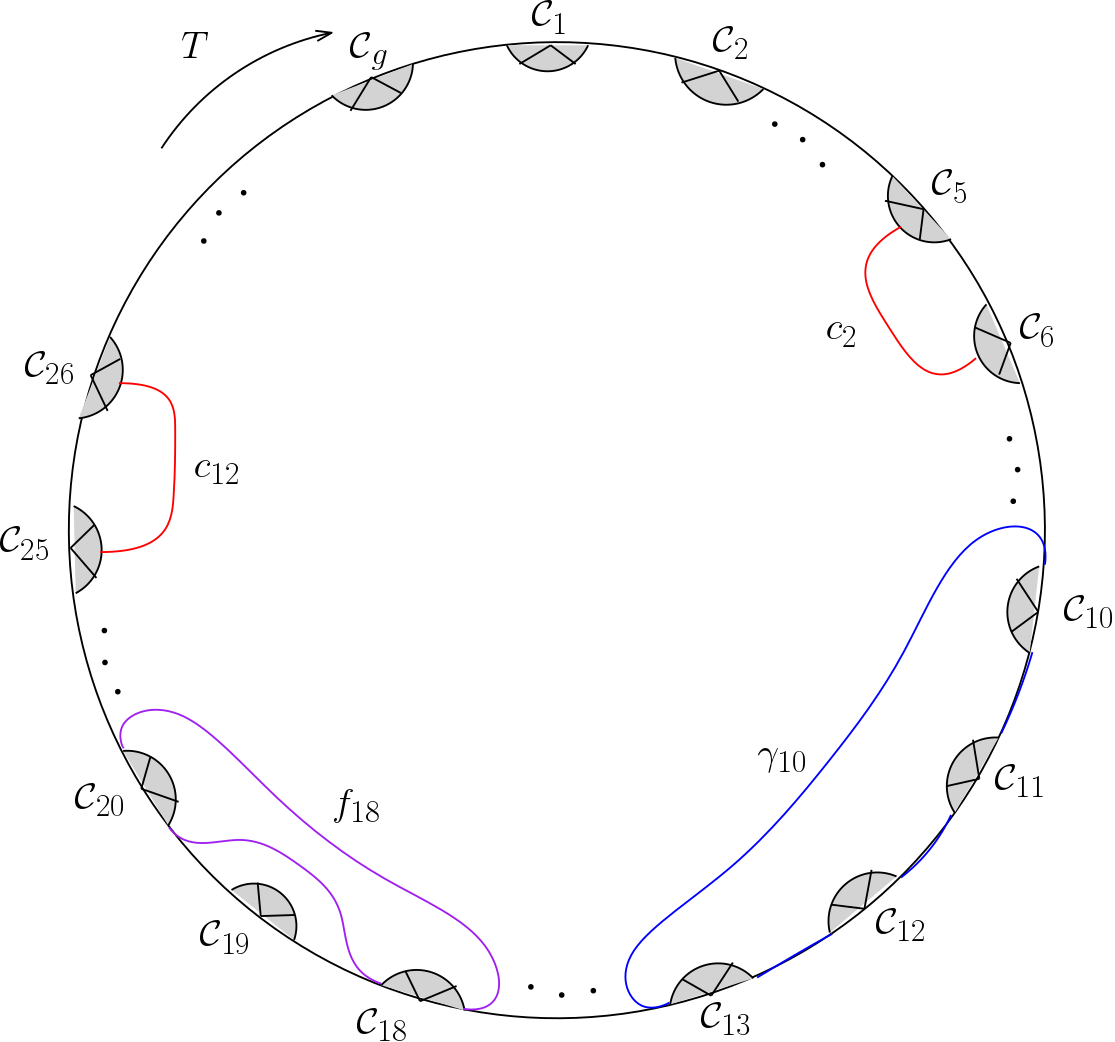}}
\caption{The rotation $T$ of $N_g$ if $g=2r+1$.}
\label{TO}
\end{center}
\end{figure}
\begin{theorem}\label{t29}
For $g=2r+1\geq27$, the twist subgroup $\mathcal{T}_g$ is generated by the elements $T$ and $\Gamma_{10}C_2^{-1}F_{18}C_{12}^{-1}$.
\end{theorem}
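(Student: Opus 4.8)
The plan is to show that the group $G$ generated by $T$ and $W:=\Gamma_{10}C_2^{-1}F_{18}C_{12}^{-1}$ contains all of the Dehn twist generators from Theorem~\ref{t1}, namely $A_1A_2^{-1}$, $B_1B_2^{-1}$, $E$, and $T$ itself; since those five elements (well, four, together with $T$) generate $\mathcal{T}_g$ by Theorem~\ref{t1}(1) for $g=2r+1\geq 7$, and here $r\geq 13$, this suffices. The key mechanism is the conjugation property stated in Section~\ref{S2}: if $f\in G$ and $f(a)=b$ with $A\in G$, then $B\in G$. Since $T\in G$, we may freely conjugate any element of $G$ by powers of $T$, which permutes the curves $b_i,c_i$ cyclically (with $T(b_i)=c_i$, $T(c_j)=b_{j+1}$, $T(a_1)=b_1$, $T^2(b_r)=a_1$). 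So the heart of the argument is to extract from the single element $W$ one honest Dehn twist, after which the $T$-orbit of that twist gives us a large supply of twists about the $b_i$- and $c_i$-curves, and then products and further conjugations build $A_1A_2^{-1}$, $B_1B_2^{-1}$, and $E$.

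First I would record the geometric disjointness data for the four curves $\gamma_{10}, c_2, f_{18}, c_{12}$ appearing in $W$: the indices are spread out (and $18$ is even, $10,12,2$ are even) precisely so that these four curves are pairwise disjoint, hence $\Gamma_{10}, C_2, F_{18}, C_{12}$ pairwise commute and $W$ is a genuine multitwist. The first real step is to isolate one factor. I would conjugate $W$ by a suitable power $T^k$ and by $W$ itself (or another conjugate $T^j W T^{-j}$) and take a commutator: because the curves are disjoint except in controlled patterns, a commutator $[W, T^kWT^{-k}]$ collapses to a product of twists about only those curves that are moved across each other, and for a well-chosen $k$ this leaves a single Dehn twist (or a product of two twists about disjoint curves that can be further separated). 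This is the standard Baykur--Korkmaz-type trick adapted to the nonorientable setting, and getting the bookkeeping of which $T^k$ isolates a clean twist is the crux.

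Once a single Dehn twist, say $C_2$ (or $\Gamma_{10}$), is shown to lie in $G$, the rest is propagation. Applying powers of $T$ to $C_2$ yields $C_i$ for all $i$ and then $B_i$ for all $i$, and the two exceptional curves $a_1$ via $T^2(b_r)=a_1$; so $A_1\in G$, all $B_i\in G$, all $C_i\in G$. From these one forms $A_1A_2^{-1}$ and $B_1B_2^{-1}$ directly. For $E$: one expresses $E$ (or a conjugate of it) in terms of twists already in $G$ using a lantern or chain relation among curves in Figure~\ref{G}, or one observes that a further conjugate of $W$ by an appropriate element of the already-constructed subgroup moves some $c_i$ to $e$, placing $E$ in $G$; alternatively one uses that $F_{18}$ and $\Gamma_{10}$, now individually in $G$, together with the $B_i,C_i$ generate a subgroup known to contain $E$. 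Having all five generators of Theorem~\ref{t1}(1) inside $G$ gives $G=\mathcal{T}_g$, and since the reverse inclusion $G\subseteq\mathcal{T}_g$ is immediate (each of $T$, $\Gamma_{10}$, $C_2$, $F_{18}$, $C_{12}$ lies in $\mathcal{T}_g$, the last four being Dehn twists about two-sided curves and $D(T)=1$), the proof concludes. The main obstacle I anticipate is the first step: choosing the exponents $k$ (and possibly a second word built from $T$ and $W$) so that an explicit commutator or product telescopes down to exactly one Dehn twist, which requires a precise picture of how $T$ shifts the indices of $\gamma_{10}, f_{18}, c_{2}, c_{12}$ and careful use of commutativity and the conjugation sign.
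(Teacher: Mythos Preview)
Your overall plan---reduce to the generators of Theorem~\ref{t1} by exploiting $T$-conjugation---matches the paper, and your observation that $\gamma_{10},c_2,f_{18},c_{12}$ are pairwise disjoint so that $W$ is a multitwist is the right starting point. The gap is in what you identify as ``the first real step.'' You propose to collapse a commutator $[W,T^kWT^{-k}]$ down to a \emph{single} Dehn twist and then propagate via the $T$-orbit. The paper does not do this, and it is not clear such a $k$ exists: for generic $k$ the two quadruples of curves are disjoint and the commutator is trivial, while for the exceptional $k$ one typically lands on a product of several twists, not one. More importantly, even after many reductions the paper never isolates a single twist by this mechanism alone.

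What the paper actually does is a two-stage reduction. First, a specific chain of conjugations (by $T^{\pm 4}$, $T^{\pm 2}$, $T^{\pm 1}$ and by products like $G_2G_1^{-1}$, $G_4G_5$) turns the four-factor multitwist into explicit \emph{two-factor} products, ultimately producing exactly $B_1B_2^{-1}$ and $A_1A_2^{-1}=\Gamma_1A_1^{-1}$ in $G$. The key trick is that conjugating a multitwist $G_i$ by a product $G_iG_j^{-1}$ replaces one curve in $G_i$ by a curve from $G_j$ while fixing the others, so one can swap factors one at a time. Second---and this is the step missing from your outline---the paper invokes the \emph{proof} of Theorem~\ref{t1} in~\cite{apy1} as a black box: once $T$, $A_1A_2^{-1}$, and $B_1B_2^{-1}$ lie in $G$, that argument already shows $A_1,A_2,B_i,C_j\in G$ individually. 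Only then does one peel off $F_{18}$ from $W$, conjugate to $F_1=T^{-17}F_{18}T^{17}$, and obtain $E=A_1F_1A_1^{-1}$. So the target of the reduction is not ``any single twist'' but precisely the pair $A_1A_2^{-1},B_1B_2^{-1}$, and the passage to individual twists is outsourced to~\cite{apy1}; without that citation your propagation step (and the claim $A_2\in G$, since $a_2$ is not in the $T$-orbit of the chain curves) does not go through.
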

\begin{proof}
Consider the surface $N_g$ as in Figure~\ref{TO}.
Let $G_1:=\Gamma_{10}C_2^{-1}F_{18}C_{12}^{-1}$ and let $G$ be the subgroup of $\mathcal{T}_g$ generated by $T$ and $G_1$. It follows from Theorem~\ref{t1} that it is enough to prove that the elements $A_1A_{2}^{-1}$, $B_1B_{2}^{-1}$ and $E$ are contained in the subgroup $G$. Now, since
\[
T^{-4}(\gamma_{10},c_2,f_{18},c_{12})=(\gamma_6,a_1,f_{14},c_{10}),
\]
we have $T^{-4}G_1T^{4}=\Gamma_6A_{1}^{-1}F_{14}C_{10}^{-1}\in G$. Let $G_2:=\Gamma_6A_{1}^{-1}F_{14}C_{10}^{-1}$ and we get
\[
G_3:=(G_2G_1^{-1})G_2(G_2G_1^{-1})^{-1}=C_2A_{1}^{-1}F_{14}C_{10}^{-1} \in G.
\]
Before we proceed any further as we have similar cases in the remaining parts of the paper, let us explain some details of this calculation. It is easy to verify that the diffeomorphism $G_2G_1$ maps the curves $\gamma_6,a_1,f_{14},c_{10}$ to $c_2,a_1,f_{14},c_{10}$, respectively. Then, we have
\begin{eqnarray*}
G_3:&=&(G_2G_1^{-1})G_2(G_2G_1^{-1})^{-1}\\
       &=&(G_2G_1^{-1})\Gamma_6A_{1}^{-1}F_{14}C_{10}^{-1}(G_2G_1^{-1})^{-1}\\
       &=&C_2A_{1}^{-1}F_{14}C_{10}^{-1}.
\end{eqnarray*}
From this, we get 
\[
G_2G_{3}^{-1}=\Gamma_6C_2^{-1} \in G,
\]
and also
\[
T^{4}\Gamma_6C_2^{-1}T^{-4}=\Gamma_{10}C_4^{-1} \in G.
\]
Let
\[
G_4:=C_4\Gamma_{10}^{-1}G_1=C_4C_2^{-1}F_{18}C_{12}^{-1} \in G,
\]
then also
\[
G_5:=T^{-1}G_4T=B_4B_{2}^{-1}F_{17}B_{12}^{-1} \in G.
\]
We also have 
\[
G_6:=(G_4G_5)G_3(G_4G_5)^{-1}=B_2A_{1}^{-1}F_{14}C_{10}^{-1} \in G.
\]
Hence, $G_3G_{6}^{-1}=C_2B_{2}^{-1} \in G$. By
conjugating with powers of $T$, for $i=1,\ldots,r-1$, we get $C_iB_{i}^{-1} \in G$.
We also have
\begin{eqnarray*}
G_7&:=&T^{-2}G_4T^{2}=C_3C_{1}^{-1}F_{16}C_{11}^{-1} \in G,\\
G_8&:=&T^{4}G_7T^{-4}=C_5C_{3}^{-1}F_{20}C_{13}^{-1} \in G, \textrm{ if } g\geq 29,\\
(G_8&:=&T^{4}G_7T^{-4}=C_5C_{3}^{-1}F_{20}D_{1}^{-1} \in G, \textrm{ if } g=27,)\\
G_9&:=&(G_7G_5)G_8(G_7G_5)^{-1}=C_5B_{4}^{-1}F_{20}C_{13}^{-1} \in G.\textrm{ if } g\geq 29.\\
(G_9&:=&(G_7G_5)G_8(G_7G_5)^{-1}=C_5B_{4}^{-1}F_{20}D_{1}^{-1} \in G, \textrm{ if } g=27.)\\
\end{eqnarray*}
From these, we conclude that $G_8^{-1}G_9=C_3B_{4}^{-1} \in G$. Furhermore, we have 
\[
T^{-4}(B_3C_3^{-1})(C_3B_4^{-1})T^4=B_1B_2^{-1} \in G.
\]
We then have
\begin{eqnarray*}
G_{10}&:=&T^{-4}G_9T^{4}=C_3B_{2}^{-1}F_{16}C_{11}^{-1} \in G,\\
G_{11}&:=&G_{10}(B_2C_2^{-1})=C_3C_{2}^{-1}F_{16}C_{11}^{-1} \in G,\\
G_{12}&:=&T^{-3}G_{11}T^{3}=B_2B_{1}^{-1}F_{13}B_{10}^{-1} \in G.
\end{eqnarray*} 
Thus, we have
\[
T^{5}(B_1B_2^{-1}G_{12})T^{-5}=T^{5}F_{13}B_{10}^{-1}T^{-5}=F_{18}C_{12}^{-1} \in G.
\]
Then, we get the element
\[
G_1F_{18}^{-1}C_{12}=\Gamma_{10}C_2^{-1} \in G,
\]
which implies that $\Gamma_{8}C_1^{-1} \in G$ by conjugating $\Gamma_{10}C_2^{-1}$ with $T^{-2}$. Hence, we have 
\[
C_4C_{2}^{-1}=(C_4\Gamma_{10}^{-1})(\Gamma_{10}C_2^{-1})\in G.
\]
Also, $C_1C_{3}^{-1}$ is in the subgroup $G$ by conjugating $C_2C_{4}^{-1}$ with $T^{-2}$. Then, we have the element
\[
\Gamma_{8}B_4^{-1}=(\Gamma_{8}C_1^{-1})(C_1C_{3}^{-1})(C_3B_{4}^{-1}) \in G.
\]
This implies that $\Gamma_1A_1^{-1}=A_2A_1^{-1}$ is contained in $G$ by conjugating the element $\Gamma_{8}B_4^{-1}$ 
with $T^{-7}$. We conclude that the elements $A_1A_2^{-1}$, $B_1B_2^{-1}$ belong to $G$. By the proof of Theorem~\ref{t1} 
(see \cite{apy1}), the subgroup $G$ contains the elements $A_1$, $A_2$, $B_i$ and $C_j$ for all $i=1,\ldots,r$ and $j=1,\ldots,r-1$. Hence, we get 
$\Gamma_{10}$,$C_{2}$ and $C_{12}$ are in $G$ by conjugating the elements $A_1$ and $A_2$ with powers of $T$. 
We then have
\[
F_1=T^{-17}F_{18}T^{17}=T^{-17}(C_{12}C_2\Gamma_{10}^{-1}G_1)T^{17}  \in G.
\]
It follows from $A_1F_1A_1^{-1}=E$ that the element $E$ is in $G$, which finishes the proof.
\end{proof}
\begin{figure}[h]
\begin{center}
\scalebox{0.22}{\includegraphics{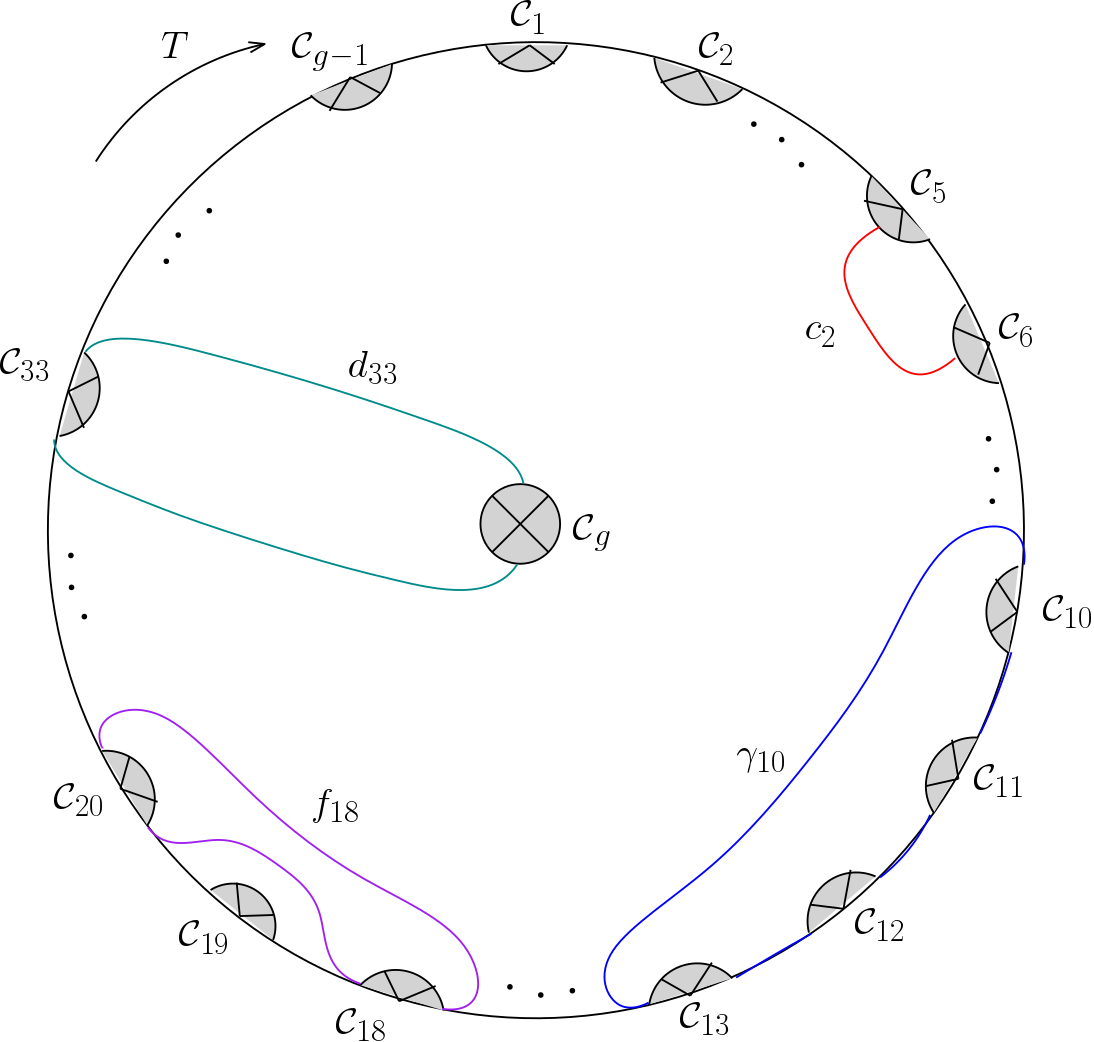}}
\caption{The rotation $T$ of $N_g$ if $g=2r+2$.}
\label{TE}
\end{center}
\end{figure}

\begin{theorem}\label{t42}
For $g=2r+2\geq42$, the twist subgroup $\mathcal{T}_g$ is generated by the elements $T$ and $\Gamma_{10}C_2^{-1}F_{18}D_{33}^{-1}$.
\end{theorem}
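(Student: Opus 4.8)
\textbf{Proof proposal for Theorem~\ref{t42}.}
The plan is to follow exactly the same strategy as in the proof of Theorem~\ref{t29}, but now working on the even-genus surface $N_g$ with $g=2r+2$ and with the rotation $T$ of order $g-1=2r+1$ depicted in Figure~\ref{TE}. By Theorem~\ref{t1}(2), if $G$ denotes the subgroup generated by $T$ and $G_1:=\Gamma_{10}C_2^{-1}F_{18}D_{33}^{-1}$, it suffices to show that $A_1A_2^{-1}$, $B_1B_2^{-1}$, $D_{g-1}$ and $E$ all lie in $G$. The element $T$ permutes the curves $a_1,b_i,c_i$ among themselves just as in the odd case (now $T^2(b_r)=a_1$, $T(c_j)=b_{j+1}$, etc.), and it also moves the curves $\gamma_i$, $f_i$, and the curves $d_i$ in a predictable way; the key extra feature of the even case is the presence of the curve $d_{g-1}$ and the one-sided structure that produces the semidirect-product factor $\mathbb{Z}_2^{2h+1}$ in homology, which is why a twist $D_{33}$ (rather than a fourth $C$-type twist as in Theorem~\ref{t29}) appears in the generator $G_1$.

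First I would mimic the opening moves of Theorem~\ref{t29}: conjugate $G_1$ by a suitable power of $T$ so that the first three ``letters'' $\Gamma_{10},C_2,F_{18}$ get pushed down to $\Gamma_6,A_1,F_{14}$ while the last letter $D_{33}$ goes to some $D_j$ or $C_j$; then form the commutator-type product $(G_2G_1^{-1})G_2(G_2G_1^{-1})^{-1}$, using the fact that $G_2G_1$ fixes all but the first curve, to replace the leading $\Gamma_6$ by $C_2$ and thereby extract the short element $\Gamma_6C_2^{-1}\in G$. Conjugating by powers of $T$ yields $\Gamma_iC_j^{-1}$-type relations, and combining these with further $T$-conjugates of $G_1$ and $G_4:=C_4\Gamma_{10}^{-1}G_1$ (or its even-case analogue) I would peel off the relations $C_iB_i^{-1}$, then $C_3B_4^{-1}$, then $B_1B_2^{-1}$, then $A_2A_1^{-1}$, exactly paralleling $G_3$ through the final paragraph of the previous proof. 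Once $A_1A_2^{-1}$ and $B_1B_2^{-1}$ are in $G$, the proof of Theorem~\ref{t1} (from \cite{apy1}) gives all the individual twists $A_1,A_2,B_i,C_j$, hence $\Gamma_{10},C_2$ and also $D_{33}$ (the latter being a $T$-conjugate of $D_{g-1}$, or obtainable from an already-recovered twist), so that $F_{18}=C_2\Gamma_{10}^{-1}D_{33}G_1^{-1}$ and then $F_1=T^{-17}F_{18}T^{17}$ lie in $G$; finally $E=A_1F_1A_1^{-1}\in G$, and $D_{g-1}$ is recovered from $D_{33}$ by a power of $T$. That completes the verification that $G=\mathcal{T}_g$.

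The step I expect to be the main obstacle is bookkeeping the orbit of the curve $d_{33}$ (and the other $d_i$, $f_i$) under powers of $T$ on the even surface, and in particular choosing the indices $10,2,18,33$ so that: (i) all four curves actually appear in the Omori/Theorem~\ref{t1} generating picture for the relevant range $g=2r+2\ge 42$; (ii) the first three curves descend to $\gamma_6,a_1,f_{14}$ under the same power of $T$ that sends $d_{33}$ somewhere usable; and (iii) no index collisions or wrap-around (mod $g-1$) spoil the chain of conjugations before $g$ reaches $42$. This is precisely the reason the bound here ($g\ge 42$) is larger than in the odd case ($g\ge 27$): one needs enough room in the index range for the $D$-curve manipulations to go through. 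Apart from this indexing care, every individual equality is a routine application of the commutativity and conjugation properties of Dehn twists recalled in Section~\ref{S2}, verified by tracking where the named curves are sent, so I would present the argument as a sequence of displayed elements $G_1,G_2,\dots$ with a one-line justification (``$G_iG_j$ maps these curves to those'') for each, just as in Theorem~\ref{t29}.
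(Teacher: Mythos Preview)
Your outline misses the one genuinely new ingredient the even case requires. In Theorem~\ref{t29} the fourth ``letter'' $C_{12}$ lies in the same $T$-orbit as $a_1,b_i,c_i$, so all four slots of $G_1$ travel through a single orbit and the indices cooperate. Here the curves $d_i$ form a \emph{separate} $T$-orbit (in particular $D_{33}$ never becomes a $C_j$ under any power of $T$, contrary to what you allow), and conjugating $H_1$ by $T^k$ shifts the $D$-index independently of the other three slots. The paper therefore does \emph{not} begin by mimicking the odd case. It first uses conjugation by $T^{11}$ and two commutator-type moves ($H_2$ through $H_5$) to produce $D_{34}D_{33}^{-1}$, and hence all $D_iD_j^{-1}$. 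Only after these correction factors are in hand does it return to the analogue of your opening: every time an intermediate element is conjugated by a power of $T$, it is immediately multiplied on the right by the appropriate $D_jD_{33}^{-1}$ so that its fourth factor is forced back to $D_{33}^{-1}$ (see $H_6,H_9,H_{11},H_{12},H_{14},H_{16}$). This common $D_{33}^{-1}$ in every intermediate element is what makes the key cancellations $H_6H_7^{-1}=\Gamma_6C_2^{-1}$, $H_{10}H_7^{-1}=B_2C_2^{-1}$, etc.\ go through. Your sketch omits both the preliminary $D_iD_j^{-1}$ extraction and the systematic renormalisation; calling this ``bookkeeping'' understates it --- it is the new idea in the even proof, and it is also the reason the bound jumps to $g\ge 42$ (the $T^{11}$-conjugate involves $d_{44}$).

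There is a second gap at the end. You claim that once the $A_i,B_i,C_j$ are recovered one obtains $D_{33}$ as ``a $T$-conjugate of $D_{g-1}$, or obtainable from an already-recovered twist''. The first is circular, since $D_{g-1}$ is one of the target generators; the second needs an actual argument because no $d$-curve lies in the $T$-orbit of $a_1,a_2,b_i,c_j$. The paper closes this by observing that $F_{18}^{-1}D_{33}B_{16}$ sends $(f_{18},d_{33})$ to $(f_{18},b_{16})$, so $F_{18}B_{16}^{-1}\in H$; since $B_{16}$ is already known this gives $F_{18}$, then $D_{33}=F_{18}(F_{18}^{-1}D_{33})$, and finally $D_{g-1}=T^{-33}D_{33}T^{33}$.
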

\begin{proof}
Consider the surface $N_g$ as in Figure~\ref{TE}.
Let $H_1:=\Gamma_{10}C_2^{-1}F_{18}D_{33}^{-1}$ and let $H$ be the subgroup of $\mathcal{T}_g$ generated by $T$ and $H_1$. It follows from 
Theorem~\ref{t1} that it is enough to show that the elements $A_1A_{2}^{-1}$, $B_1B_{2}^{-1}$, $D_{g-1}$ and $E$ belong to the subgroup $H$. 
By conjugating $H_1$ with $T^{11}$, we get
\[
H_2:=T^{11}H_1T^{-11}=\Gamma_{21}B_8^{-1}F_{29}D_{44}^{-1} \in H.
\]
\begin{eqnarray*}
(H_2:=T^{11}H_1T^{-11}=\Gamma_{21}B_8^{-1}F_{29}D_{3}^{-1} \in H \textit{ if } g=42.)\\
(H_2:=T^{11}H_1T^{-11}=\Gamma_{21}B_8^{-1}F_{29}D_{1}^{-1} \in H \textit{ if } g=44.)
\end{eqnarray*}
Let
\[
H_3:=(H_2H_1)H_2(H_2H_1)^{-1}=\Gamma_{21}B_8^{-1}F_{29}D_{33}^{-1} \in H.
\]
Then, the conjugation of $H_3$ with $T$ gives the element
\[
H_4:=\Gamma_{22}C_8^{-1}F_{30}D_{34}^{-1} \in H.
\]
We then have the following element in $H$ by conjugating $H_4$ with $H_4H_1$:
\[
H_5:=(H_4H_1)H_4(H_4H_1)^{-1}=\Gamma_{22}C_8^{-1}F_{30}D_{33}^{-1}.
\]
Therefore, the element $H_4^{-1}H_5=D_{34}D_{33}^{-1}$ is contained in the subgroup $H$. This implies that the elements $D_{k+1}D_{k}^{-1} \in H$ 
by conjugating with powers of $T$ . It follows that $D_{i}D_{j}^{-1} \in H$ for all $i,j$.

The subgroup $H$ contains the following elements
\begin{eqnarray*}
H_6&:=&(T^{-4}H_1T^{4})D_{29}D_{33}^{-1}=(\Gamma_6 A_{1}^{-1}F_{14}D_{29}^{-1})D_{29}D_{33}^{-1}=\Gamma_6 A_{1}^{-1}F_{14}D_{33}^{-1},\\
H_7&:=&(H_6H_1^{-1})H_6(H_6H_1^{-1})^{-1}=C_2A_{1}^{-1}F_{14}D_{33}^{-1}. 
\end{eqnarray*}
Thus, $H_6H_7^{-1}=\Gamma_6C_2^{-1} \in H$. Moreover, $\Gamma_{10}C_4^{-1}=T^4(\Gamma_6C_2^{-1})T^{-4} \in H$.

We have also the elements
\begin{eqnarray*}
H_8&:=&(C_4\Gamma_{10}^{-1})H_1=(C_4\Gamma_{10}^{-1})(\Gamma_{10}C_2^{-1}F_{18}D_{33}^{-1})=C_4C_2^{-1}F_{18}D_{33}^{-1} \in H,\\
H_9&:=&(T^{-1}H_8T)D_{32}D_{33}^{-1}=(B_4B_2^{-1}F_{17}D_{32}^{-1})(D_{32}D_{33}^{-1})=B_4B_2^{-1}F_{17}D_{33}^{-1} \in H.
\end{eqnarray*}
The conjugation of $H_7$ with $H_8H_9$ gives rise to the following element
\[
H_{10}:=(H_8H_9)H_7(H_8H_9)^{-1}=B_2A_1^{-1}F_{14}D_{33}^{-1}\in H.
\]
Now, using the elements $H_{10}$ and $H_7$, we get $H_{10}H_7^{-1}=B_2C_2^{-1} \in H$ implying that $T(B_2C_2^{-1})T^{-1}=C_2B_3^{-1} \in H$. We then have the element
\[
B_1B_2^{-1}=T^{-2}\big((B_2C_2^{-1})(C_2B_3^{-1})\big)T^2=T^{-2}(B_2B_3^{-1})T^2 \in H.
\]

We easily see that the elements 
\begin{eqnarray*}
H_{11}&:=&(T^{-2}H_{8}T^2)(D_{31}D_{33}^{-1})=(C_3C_1^{-1}F_{16}D_{31}^{-1})(D_{31}D_{33}^{-1})=C_3C_1^{-1}F_{16}D_{33}^{-1},\\
H_{12}&:=&(T^{4}H_{11}T^{-4})(D_{37}D_{33}^{-1})=(C_5C_3^{-1}F_{20}D_{37}^{-1})(D_{37}D_{33}^{-1})=C_5C_3^{-1}F_{20}D_{33}^{-1}
\end{eqnarray*}
are in the subgroup $H$. The conjugation of $H_{12}$ with the element $H_{11}H_9$ yield the following element 
\begin{eqnarray*}
H_{13}:=(H_{11}H_{9})H_{12}(H_{11}H_{9})^{-1}=C_5B_4^{-1}F_{20}D_{33}^{-1} \in H.
\end{eqnarray*}
Moreover, we have the elements
\begin{eqnarray*}
H_{14}&:=&(T^{-4}H_{13}T^4)D_{29}D_{33}^{-1}=(C_3B_2^{-1}F_{16}D_{29}^{-1})D_{29}D_{33}^{-1}=C_3B_2^{-1}F_{16}D_{33}^{-1},\\
H_{15}&:=&H_{14}(B_2C_2^{-1})=C_3B_2^{-1}F_{16}D_{33}^{-1}(B_2C_2^{-1})=C_2^{-1}C_3F_{16}D_{33}^{-1} \textrm{ and } \\
H_{16}&:=&(T^{-3}H_{15}T^3)D_{30}D_{33}^{-1}=(B_1^{-1}B_2F_{13}D_{30}^{-1})D_{30}D_{33}^{-1}=B_1^{-1}B_2F_{13}D_{33}^{-1}
\end{eqnarray*}
are contained in the subgroup $H$. Hence, the subgroup $H$ contains the element 
\[
F_{13}D_{33}^{-1}=(B_1B_2^{-1})H_{16}=(B_1B_2^{-1})B_1^{-1}B_2F_{13}D_{33}^{-1}
\]
which implies that 
\[
F_{18}D_{33}^{-1}= \big(T^{5}(F_{13}D_{33}^{-1})T^{-5}\big)D_{38}D_{33}^{-1}\in H.
\]
From this, we have
\[
H_1(F_{18}^{-1}D_{33})=\Gamma_{10}C_2^{-1} \in H
\]
which implies that the element
\[
C_4C_2^{-1}=(C_4\Gamma_{10}^{-1})(\Gamma_{10}C_2^{-1}) \in H.
\]
The conjugation of $C_4C_2^{-1}$ with $T^{2}$ gives the element $C_5C_3^{-1} \in H$. Thus implies that 
\[
B_4C_5^{-1}=(B_4C_3^{-1})(C_3C_5^{-1}) \in H.
\]
Then, we have the element
\[
\Gamma_8B_4^{-1}=(\Gamma_8C_1^{-1})(C_1C_3^{-1})(C_3C_5^{-1})(C_5B_4^{-1}) \in H.
\]
Hence the conjugation of $\Gamma_8B_4^{-1}$ with $T^{-7}$ is the element $\Gamma_1A_1^{-1}=A_2A_1^{-1}$, which belongs to $H$. By the proof of Theorem~\ref{t1}, 
we conclude that the elements $A_1$, $A_2$, $B_i$ and $C_j$ are all in the subgroup $H$ for all $i=1,\ldots,r$ and $j=1,\ldots,r-1$. It follows from
\[
F_{18}^{-1}D_{33}B_{16}(f_{18},d_{33})=(f_{18},b_{16})
\]
that $F_{18}B_{16}^{-1} \in H$. This implies that $F_{18}$ is contained in $H$ since $B_{16}\in H$. Then, we get the elements $F_1=T^{-17}F_{18}T^{17} \in H$ and 
$A_1F_1A_1^{-1}=E\in H$. Also, $D_{33}=F_{18}(F_{18}^{-1}D_{33}) \in H$. Finally, the element $D_{g-1}$ belongs to $H$ by conjugating 
$D_{33}$ with $T^{-33}$, which finishes the proof.
\end{proof}
If one wants to decrease $g$, then the number of generators increases. In the remaining parts of this section, we try to find minimal number of generators for smaller $g$.

\begin{theorem}\label{t9odd}
For $g=2r+1\geq 9$, the twist subgroup $\mathcal{T}_g$ is generated by the elements $T$, $A_1A_{2}^{-1}$ and $F_1B_{2}^{-1}$.
\end{theorem}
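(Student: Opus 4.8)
The plan is to run the bootstrapping strategy already used in the proof of Theorem~\ref{t29}. Write $G$ for the subgroup of $\mathcal{T}_g$ generated by $T$, $A_1A_2^{-1}$ and $F_1B_2^{-1}$. Since $g=2r+1$ with $r\geq 4\geq 3$, Theorem~\ref{t1}(1) reduces the task to showing that $B_1B_2^{-1}\in G$ and $E\in G$; the rotation $T$ and the element $A_1A_2^{-1}$ are generators of $G$ by definition. As a first step I would record the $T$-orbits of the curves involved, namely $T(b_i)=c_i$, $T(c_j)=b_{j+1}$, $T(a_1)=b_1$, together with $T(\gamma_i)=\gamma_{i+1}$, $T(f_i)=f_{i+1}$ and the identification $\gamma_1=a_2$ (so $A_2=\Gamma_1$, as in the proof of Theorem~\ref{t29}), and then conjugate the two given difference elements by powers of $T$. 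This populates $G$ with a large supply of ``elementary'' twist differences: the classes coming from $A_1\Gamma_1^{-1}$, running $A_1\Gamma_1^{-1},\,B_1\Gamma_2^{-1},\,C_1\Gamma_3^{-1},\,B_2\Gamma_4^{-1},\dots$ along the chain, and the classes coming from $F_1B_2^{-1}$, running $F_1B_2^{-1},\,F_2C_2^{-1},\,F_3B_3^{-1},\,F_4C_3^{-1},\dots$.

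The core of the argument is to extract $B_1B_2^{-1}$ from this data. Here I would use the elementary principle recalled right after the conjugation property: if $w\in G$ is a product of Dehn twists carrying the curves underlying one of our difference elements onto the curves underlying another, then conjugation by $w$ transports the first element to the second. Combining a few such conjugations with commutator identities of the form $(UV^{-1})U(UV^{-1})^{-1}$ — precisely the manipulations that produce $G_3,G_6,G_9,\dots$ in the proof of Theorem~\ref{t29} — one can ``replace a single curve'' inside a product of twists about disjoint curves. Applying this to the $F$-bearing elements together with the conjugates of $A_1\Gamma_1^{-1}$, I expect to land on $F_1B_1^{-1}\in G$. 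Then $B_1B_2^{-1}=(F_1B_1^{-1})^{-1}(F_1B_2^{-1})\in G$, the two copies of $F_1$ cancelling directly with no disjointness needed.

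Once $T$, $A_1A_2^{-1}$ and $B_1B_2^{-1}$ all lie in $G$, the argument in the proof of Theorem~\ref{t1} from \cite{apy1} shows that $G$ contains each twist $A_1$, $A_2$, $B_i$ for $i=1,\dots,r$ and $C_j$ for $j=1,\dots,r-1$. In particular $B_2\in G$, hence $F_1=(F_1B_2^{-1})B_2\in G$, and finally $E=A_1F_1A_1^{-1}\in G$ using the relation employed at the end of the proof of Theorem~\ref{t29}. Therefore $B_1B_2^{-1}\in G$ and $E\in G$, so $G=\mathcal{T}_g$.

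The step I expect to be the main obstacle is the middle one: finding the explicit short word in the $T$-conjugates of $A_1A_2^{-1}$ and $F_1B_2^{-1}$ whose commutator gymnastics isolates $F_1B_1^{-1}$ (or $B_1B_2^{-1}$ directly), and then — exactly as in the $g=27$ versus $g\geq 29$ split in Theorem~\ref{t29} — verifying that the chosen word behaves uniformly for every odd $g\geq 9$, with at most a finite list of small genera requiring a slightly modified word. The rest is routine bookkeeping: tracking images of curves under $T$ and under short products of twists, and quoting the two results above.
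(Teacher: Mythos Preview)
Your overall architecture is right and matches the paper: reduce via Theorem~\ref{t1} to showing $B_1B_2^{-1}\in G$ and $E\in G$, and once $B_1B_2^{-1}$ is obtained, invoke the proof of Theorem~\ref{t1} to get the individual twists $A_1,A_2,B_i,C_j$, recover $F_1=(F_1B_2^{-1})B_2$, and conclude $E=A_1F_1A_1^{-1}\in G$. The final paragraph is exactly how the paper finishes.

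The genuine gap is the middle paragraph: you describe a \emph{strategy} for producing $F_1B_1^{-1}$ (or $B_1B_2^{-1}$) by imitating the $G_3,G_6,G_9$ manipulations of Theorem~\ref{t29}, but you do not actually carry it out, and you yourself flag this as the main obstacle. A proposal that leaves precisely the nontrivial step at ``I expect to land on $F_1B_1^{-1}$'' is incomplete; the curve-replacement trick only works once you exhibit a specific element of $G$ carrying one configuration of curves to the other, and nothing you have written guarantees such an element can be built from the limited stock of generators available here.

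In fact the paper's route is both shorter and different in flavour from what you outline. It does not imitate the long bookkeeping of Theorem~\ref{t29}; instead it uses a handful of targeted conjugations. From $T^{-3}(f_1,b_2)=(f_{g-2},a_1)$ one gets $F_{g-2}A_1^{-1}\in G$; then $(A_1F_{g-2}^{-1})(F_1B_2^{-1})$ carries $(f_{g-2},a_1)$ to $(f_{g-2},f_1)$, giving $F_{g-2}F_1^{-1}\in G$, and hence $B_2A_1^{-1}$ and $B_2A_2^{-1}$ lie in $G$. Conjugating $B_2A_2^{-1}$ by $T^{-2}$ yields $B_1\Gamma_{g-1}^{-1}$, and one more curve-image check with $(A_1B_2^{-1})(B_1\Gamma_{g-1}^{-1})$ produces $B_1B_2^{-1}$ directly. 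No case split for small genus is needed. So rather than porting the heavy machinery of Theorem~\ref{t29}, the efficient move is to exploit that $T^{-3}$ sends $b_2$ to $a_1$, aligning the $F$-difference with the $A$-difference in one stroke.
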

\begin{proof}
Consider the surface $N_g$ as shown in Figure~\ref{TO}. Let $G$ denote the subgroup of $\mathcal{T}_g$ generated by the elements $T$, $A_1A_{2}^{-1}$ and $F_1B_{2}^{-1}$. By Theorem~\ref{t1}, it suffices to prove that the elements $B_1B_2^{-1}$ and $E$ are contained in the subgroup $G$. It follows from the element $T^{-3}$ maps the curves $(f_1,b_2)$ to the curves $(f_{g-2},a_1)$ that we obtain
\[
T^{-3}(F_1B_2^{-1})T^{3}=F_{g-2}A_1^{-1}.
\]
Since each factor on the left-hand side is contained in $G$, the element $F_{g-2}A_1^{-1}$ is also contained in $G$. Also, since 
\[
(A_1F_{g-2}^{-1})(F_1B_2^{-1})(f_{g-2},a_1)=(f_{g-2},f_1),
\]
the subgroup $G$ contains the element $F_{g-2}F_1^{-1}$. Hence, we get the element 
\[
F_{g-2}B_2^{-1}=(F_{g-2}F_1^{-1})(F_1B_2^{-1}) \in G.
\]
From these, one obtains the following elements
\begin{eqnarray*}
B_2A_1^{-1}&=&(B_2F_{g-2}^{-1})(F_{g-2}A_1^{-1}) \in G,\\
B_2A_2^{-1}&=&(B_2A_1^{-1})(A_{1}A_2^{-1}) \in G.
\end{eqnarray*}
Then, the element
\[
B_1\Gamma_{g-1}^{-1}=T^{-2}(B_2A_2^{-1})T^2\in G,
\]
since $T^{-2}$ maps $(b_2,a_2)$ to $(b_1,\gamma_{g-1})$. Then, it follows from
\[
(A_1B_2^{-1})(B_1\Gamma_{g-1}^{-1})(a_1,b_2)=(b_1,b_2)
\]
 and $A_1B_2^{-1} \in G$ that the element $B_1B_2^{-1} \in G$.
Since the subgroup $G$ contains the elements $T$, $A_1A_2^{-1}$ and $B_1B_2^{-1}$, the proof of Theorem~\ref{t1} (see \cite{apy1}) implies that the elements $A_1$, $A_2$, $B_i$ and $C_j$ belong to the subgroup $G$ for all $i=1,\ldots,r$ and $j=1,\ldots,r-1$. In particular $B_2 \in G$, which implies that $F_1=(F_1B_2^{-1})B_2 \in G$. This completes the proof since the element $E=A_1F_1A_1^{-1} \in G$.
\end{proof}

\begin{theorem}\label{t8even}
For $g=2r+2\geq 8$, the twist subgroup $\mathcal{T}_g$ is generated by the elements $T$, $D_{g-1}A_{2}^{-1}$ and $F_1B_{2}^{-1}$.
\end{theorem}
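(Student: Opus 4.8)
The plan is to mirror the strategy used in the proof of Theorem~\ref{t9odd}, adapting it to the even case where the extra generator $D_{g-1}$ appears in Theorem~\ref{t1}. Let $G$ be the subgroup of $\mathcal{T}_g$ generated by $T$, $D_{g-1}A_2^{-1}$ and $F_1B_2^{-1}$. By Theorem~\ref{t1}(2) it suffices to produce the elements $A_1A_2^{-1}$, $B_1B_2^{-1}$, $D_{g-1}$ and $E$ inside $G$. I would first chase the generator $F_1B_2^{-1}$ around by powers of $T$ exactly as in the odd case: conjugating by $T^{-3}$ should send $(f_1,b_2)$ to $(f_{g-2},a_1)$, giving $F_{g-2}A_1^{-1}\in G$; then a commutator-type manipulation of the form $(A_1F_{g-2}^{-1})(F_1B_2^{-1})$ applied to the curve pair $(f_{g-2},a_1)$ yields $F_{g-2}F_1^{-1}$, hence $F_{g-2}B_2^{-1}$ and then $B_2A_1^{-1}\in G$.

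Next I would bring in the twisted generator $D_{g-1}A_2^{-1}$ to recover $A_1A_2^{-1}$: combining $B_2A_1^{-1}$ with a suitable $T$-conjugate of $D_{g-1}A_2^{-1}$ (note $T$ permutes the curves $d_i$, so conjugation moves $d_{g-1}$ around the $d$-chain and eventually off it to one of the $b$ or $c$ curves) should express $A_1A_2^{-1}$ as a product of elements already shown to lie in $G$. Once $A_1A_2^{-1}\in G$, the argument of Theorem~\ref{t9odd} applies almost verbatim: one gets $B_2A_2^{-1}=(B_2A_1^{-1})(A_1A_2^{-1})$, then $B_1\Gamma_{g-1}^{-1}$ by conjugating by $T^{-2}$, and finally $B_1B_2^{-1}$ from $(A_1B_2^{-1})(B_1\Gamma_{g-1}^{-1})$ applied to $(a_1,b_2)$. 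At this point $G$ contains $T$, $A_1A_2^{-1}$ and $B_1B_2^{-1}$, so by the proof of Theorem~\ref{t1} in \cite{apy1} it contains $A_1,A_2,B_i,C_j$ for all relevant $i,j$.

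It then remains to extract $D_{g-1}$ and $E$. Since $B_2\in G$ we get $F_1=(F_1B_2^{-1})B_2\in G$ and hence $E=A_1F_1A_1^{-1}\in G$, just as before. For $D_{g-1}$: with $A_2\in G$ now available, $D_{g-1}=(D_{g-1}A_2^{-1})A_2\in G$. This is in fact the cleanest point to close the argument.

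The main obstacle I anticipate is the middle step — using $D_{g-1}A_2^{-1}$ to produce $A_1A_2^{-1}$ — because, unlike the pair $(f_1,b_2)$ which sits in a "generic" position on the surface, the curve $d_{g-1}$ is attached near the boundary of the chain of crosscaps and one must track carefully how the rotation $T$ moves it (and whether it lands disjoint from or intersecting the curves $a_1,a_2,f_{g-2}$ used above). I expect one needs an intermediate relation analogous to $F_{g-2}F_1^{-1}\in G$ but connecting a $T$-translate of $d_{g-1}$ to an $a$- or $f$-curve, obtained by applying an appropriate product of already-known twists to a pair of curves meeting in one point. Everything else is a routine translation of the odd-genus proof, so I would present the $d$-curve bookkeeping in full detail and compress the rest by analogy with Theorem~\ref{t9odd}.
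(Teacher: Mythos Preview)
Your outline has the right overall shape and matches the paper's structure (reduce to $A_1A_2^{-1}$, $B_1B_2^{-1}$, $D_{g-1}$, $E$; use Theorem~\ref{t1}; finish with $D_{g-1}=(D_{g-1}A_2^{-1})A_2$ and $E=A_1F_1A_1^{-1}$). Two points need correction.

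First, an indexing slip: in the even case the rotation $T$ has order $g-1$, not $g$, so $T^{-3}(f_1,b_2)=(f_{g-3},a_1)$, not $(f_{g-2},a_1)$. This shift propagates through the argument (and similarly your $\Gamma_{g-1}$ would become $\Gamma_{g-2}$).

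Second, and this is the substantive gap: your proposed resolution of the ``main obstacle'' --- conjugating $D_{g-1}A_2^{-1}$ by powers of $T$ until $d_{g-1}$ leaves the $d$-chain --- does not work, because $T$ simply permutes the $d_i$ among themselves and never carries $d_{g-1}$ onto an $a$, $b$ or $c$ curve. The paper's move is much more direct and requires no $T$-conjugation of the $d$-curve at all: the curves $d_{g-1}$ and $f_{g-3}$ intersect once, while $a_1$ and $a_2$ are disjoint from both. Hence the element $(F_{g-3}A_1^{-1})(D_{g-1}A_2^{-1})$, which you already have in $G$, sends $(f_{g-3},a_1)$ to $(d_{g-1},a_1)$, giving $D_{g-1}A_1^{-1}\in G$ immediately; then $A_1A_2^{-1}=(A_1D_{g-1}^{-1})(D_{g-1}A_2^{-1})$. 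So the step you flagged as the hardest is in fact a single application of the conjugation trick, once you notice the $d_{g-1}$--$f_{g-3}$ intersection.

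A minor further difference: to reach $B_1B_2^{-1}$ the paper does not go via a $\Gamma$-curve as in the odd case, but instead conjugates $A_1B_2^{-1}$ by $T^2$ to get $C_1B_3^{-1}$, uses $(C_1B_3^{-1})(B_2A_1^{-1})$ on $(b_3,c_1)$ to obtain $B_3B_2^{-1}$, and then conjugates by $T^{-2}$. Your endgame for $E$ and $D_{g-1}$ is exactly the paper's.
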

\begin{proof}
Let us consider the surface $N_g$ as in Figure~\ref{TE} and 
let $H$ be the subgroup of $\mathcal{T}_g$ generated by $T$, $D_{g-1}A_{2}^{-1}$ and $F_1B_{2}^{-1}$. As in the proof of Theorem~\ref{t42}, it is enough to show that the generators  $A_1A_{2}^{-1}$, $B_1B_{2}^{-1}$, $D_{g-1}$ and $E$ are contained in the subgroup $H$. It follows from 
\[
T^{-3}(f_1,b_2)=(f_{g-3},a_1)
\]
and
\[
(F_{g-3}A_1^{-1})(D_{g-1}A_2^{-1})(f_{g-3},a_1)=(d_{g-1},a_1)
\]
that the element $F_{g-3}A_1^{-1} \in H$ and so
$D_{g-1}A_1^{-1}\in H$
Hence, the subgroup $H$ contains the element
\[
A_1A_2^{-1}=(A_1D_{g-1}^{-1})(D_{g-1}A_2^{-1}).
\]
Then, since the subgroup $H$ contains the element $(A_1D_{g-1}^{-1})(F_1B_2^{-1})$ which takes the curves $(a_1,d_{g-1})$ to $(f_1,d_{g-1})$, we have $F_1D_{g-1}^{-1}\in H$.
This implies that 
\[
F_1A_1^{-1}=(F_1D_{g-1}^{-1})(D_{g-1}A_1^{-1}) \in H
\]
and so
\[
A_1B_2^{-1}=(A_1F_1^{-1})(F_1B_2^{-1})\in H.
\]
By conjugating the element $A_1B_2^{-1}$ with $T^2$, we see that $C_1B_3^{-1}$ belongs to $H$. Then, it follows from 
\[
(C_1B_3^{-1})(B_2A_1^{-1})(b_3,c_1)=(b_3,b_2)
\]
that $H$ contains the element $B_3B_2^{-1}$. From this,
\[
T^{-2}(B_3B_2^{-1})T^2=B_2B_1^{-1} \in H.
\]
Thus, $H$ contains the elements $T$, $A_1A_2^{-1}$ and $B_1B_2^{-1}$, which implies that it also contains the Dehn twists $A_1$, $A_2$, $B_1,\ldots,B_r$ and $C_1,\ldots,C_{r-1}$ by the proof of Theorem~\ref{t1}. Hence, we have
\[
D_{g-1}=(D_{g-1}A_2^{-1})A_2\in H
\]
and 
\[
E=A_1F_1A_1^{-1}\in H.
\]
Therefore, we conclude that $\mathcal{T}_g=H$.
\end{proof}

\section{Commutator generators for $\mathcal{T}_g$}\label{S4}
In this section, for $g=4k$ or $g=4k+1$ we use the generators obtained in the previous section since each generator can be expressed as a single commutator. For these cases, number of generators and commutator generators coincide. On the other hand, for $g=4k+2$ or $g=4k+3$ using our methods, we cannot express the generators of the previous section as a single commutator. For these cases, we come up with generating sets where each element can be expressed as a single commutator.

For a nonorientable genus $g$ surface $N_g$, let us introduce the reflections $\rho_1$ and $\rho_2$ which are contained in the twist subgroup $\mathcal{T}_g$.
\begin{figure}[h]
\begin{center}
\scalebox{0.2}{\includegraphics{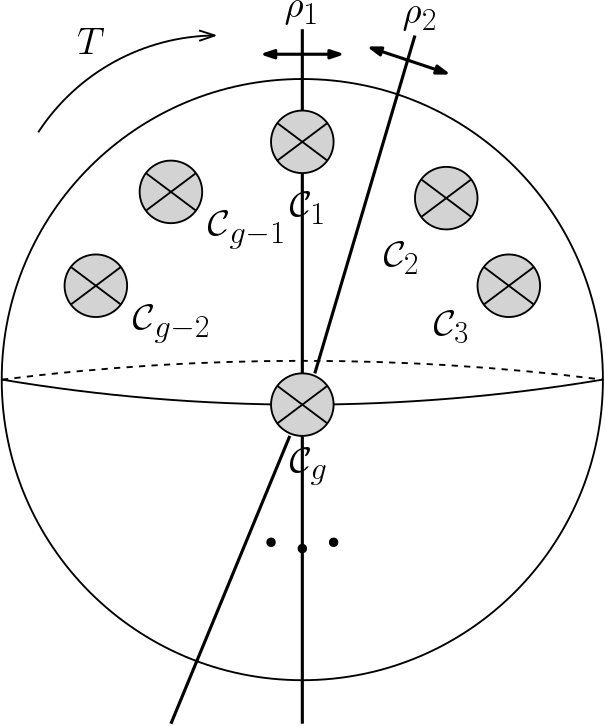}}
\caption{The reflections $\rho_1$ and $\rho_2$ on the surface $N_g$ for $g=4k$.}
\label{R4K}
\end{center}
\end{figure}

\underline{If $g=4k$}, consider the reflections $\rho_1$ and $\rho_2$ in the indicated planes as shown in Figure~\ref{R4K} such that the rotation $T$ is given by $T=\rho_2\rho_1$. Thus
\begin{enumerate}
\item[$\bullet$]$T(x_i)=x_{i+1}$ for $i=1,\ldots,g-2$,
\item[$\bullet$] $T(x_{g-1})=x_{1}$ and $T(x_g)=x_{g}$.
\end{enumerate}
(Recall that the curves $x_i$`s are the generators of $H_1(N_g;\mathbb{R})$ as shown in Figure~\ref{H}.) It can be shown that the reflections $\rho_1$ and $\rho_2$ satisfy $D(\rho_1)=D(\rho_2)=1$, which implies that $\rho_1$ and $\rho_2$ are contained in the twist subgroup $\mathcal{T}_g$ if $g=4k$.

\begin{figure}[h]
\begin{center}
\scalebox{0.2}{\includegraphics{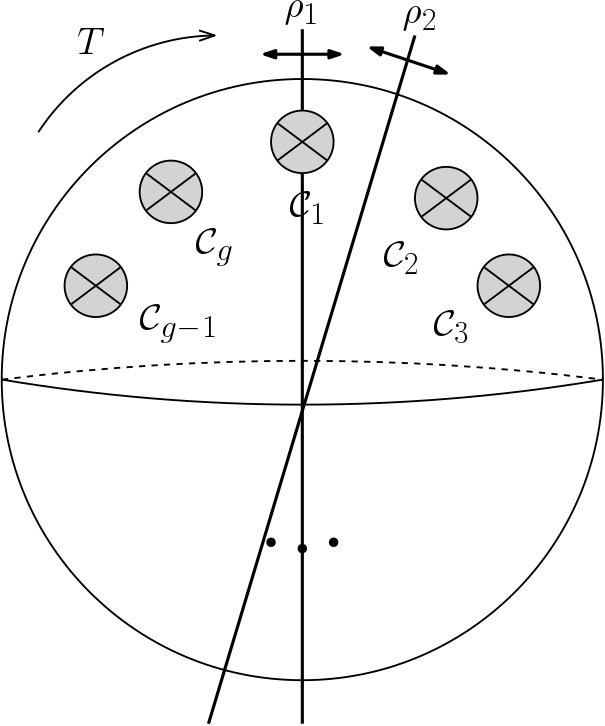}}
\caption{The reflections $\rho_1$ and $\rho_2$ on the surface $N_g$ for $g=4k+1$.}
\label{R4K+1}
\end{center}
\end{figure}

\underline{If $g=4k+1$}, then consider the reflections $\rho_1$ and $\rho_2$ in the planes depicted in Figure~\ref{R4K+1} so that the rotation $T$ can be expressed as $T=\rho_2\rho_1$. Thus
\begin{enumerate}
\item[$\bullet$]$T(x_i)=x_{i+1}$ for $i=1,\ldots,g-1$ and $T(x_{g})=x_{1}$.
\end{enumerate}
 It can be verified that the reflections $\rho_1$ and $\rho_2$ are contained in the twist subgroup $\mathcal{T}_g$ since they satisfy $D(\rho_1)=D(\rho_2)=1$ if $g=4k+1$.

\begin{figure}[h]
\begin{center}
\scalebox{0.27}{\includegraphics{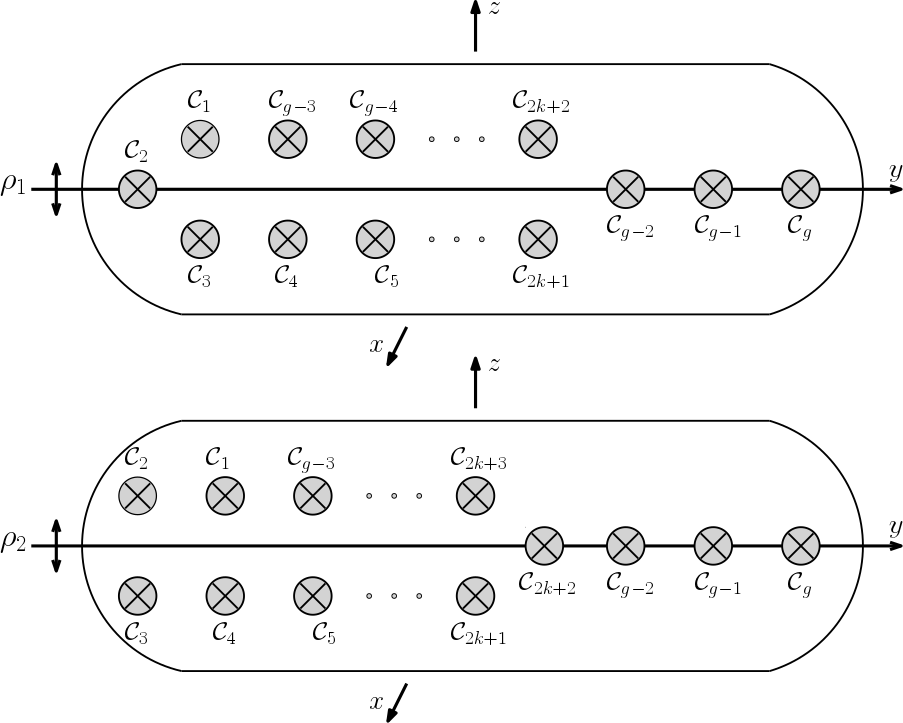}}
\caption{The reflections $\rho_1$ and $\rho_2$ on the surface $N_g$ for $g=4k+2$.}
\label{R4K+2}
\end{center}
\end{figure}

\underline{If $g=4k+2$}, consider the reflections $\rho_1$ and $\rho_2$ in the planes as shown in Figure~\ref{R4K+2} so that $T=\rho_2\rho_1$. Hence
\begin{enumerate}
\item[$\bullet$]$T(x_i)=x_{i+1}$ for $i=1,\ldots,g-4$, $T(x_{g-3})=x_{1}$,
\item[$\bullet$] $T(x_j)=x_j$ for $j\in \lbrace g-2,g-1,g \rbrace$.
\end{enumerate}
 It is easy to see that the reflections $\rho_1$ and $\rho_2$ satisfy $D(\rho_1)=D(\rho_2)=1$ if $g=4k+2$. Hence they are contained in the twist subgroup $\mathcal{T}_g$ if $g=4k+2$.

\begin{figure}[h]
\begin{center}
\scalebox{0.2}{\includegraphics{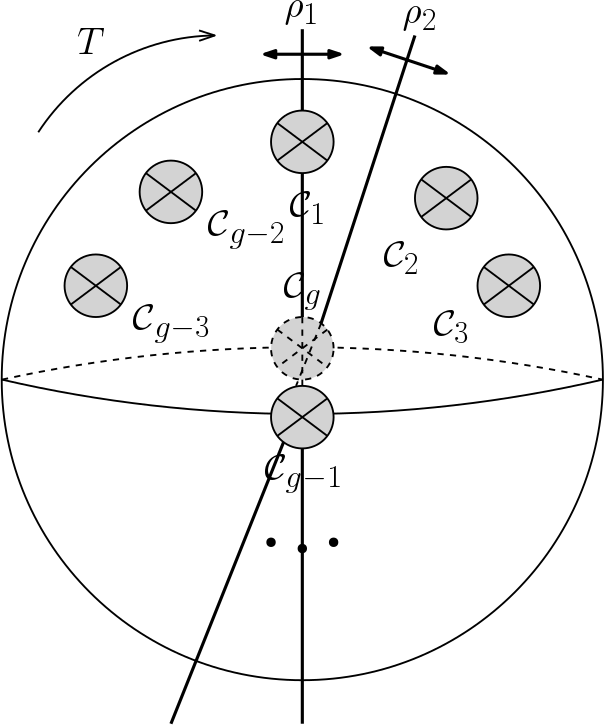}}
\caption{The reflections $\rho_1$ and $\rho_2$ on the surface $N_g$ for $g=4k+3$.}
\label{R4K+3}
\end{center}
\end{figure}

\underline{If $g=4k+3$}, then consider the reflections $\rho_1$ and $\rho_2$ in the planes as depicted in Figure~\ref{R4K+3} so that $T=\rho_2\rho_1$. Hence
\begin{enumerate}
\item[$\bullet$]$T(x_i)=x_{i+1}$ for $i=1,\ldots,g-3$, $T(x_{g-2})=x_{1}$,
\item[$\bullet$] $T(x_j)=x_j$ for $j=g-1,g$.
\end{enumerate}
 It can be seen that  $\rho_1$ and $\rho_2$ satisfy $D(\rho_1)=D(\rho_2)=1$ if $g=4k+3$. Hence, $\mathcal{T}_g$ contains the elements  $\rho_1$ and $\rho_2$ if $g=4k+3$.

\begin{proposition}\label{prop}
The mapping class $T$ in the twist subgroup $\mathcal{T}_g$ can be expressed as
\begin{enumerate}
\item[$(1)$] $T=[T^{2k},\rho_1]$, if $g=4k$ or $g=4k+2$ and
\item[$(2)$]  $T=[T^{2k+1},\rho_1]$, if $g=4k+1$ or $g=4k+3$ ,
\end{enumerate}
where $k\geq1$.
\end{proposition}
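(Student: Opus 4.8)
The plan is to express $T$ as a commutator of a power of $T$ with the reflection $\rho_1$ by exploiting that $\rho_1$ conjugates $T$ to $T^{-1}$. First I would establish the key relation $\rho_1 T \rho_1^{-1} = T^{-1}$ (equivalently $\rho_1 T \rho_1 = T^{-1}$, since $\rho_1^2 = 1$). This follows from $T = \rho_2\rho_1$ together with $\rho_1^2 = \rho_2^2 = 1$: indeed $\rho_1 T \rho_1 = \rho_1(\rho_2\rho_1)\rho_1 = (\rho_1\rho_2)(\rho_1\rho_1) = \rho_1\rho_2 = (\rho_2\rho_1)^{-1} = T^{-1}$. This uses only that both $\rho_1,\rho_2$ are involutions, which is clear from their description as reflections, and that $T=\rho_2\rho_1$, which is recorded in each of the four cases in the excerpt.

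Granting this relation, the computation is immediate: for any integer $m$ we have $\rho_1 T^{m}\rho_1^{-1} = (\rho_1 T\rho_1^{-1})^{m} = T^{-m}$, and hence
\[
[T^{m},\rho_1] = T^{m}\rho_1 T^{-m}\rho_1^{-1} = T^{m}\,(\rho_1 T^{-m}\rho_1^{-1}) = T^{m}\cdot T^{m} = T^{2m}.
\]
Taking $m = 2k$ gives $[T^{2k},\rho_1] = T^{4k}$, and taking $m = 2k+1$ gives $[T^{2k+1},\rho_1] = T^{4k+2}$. The last ingredient is to identify these powers of $T$ with $T$ itself, i.e. to determine the order of $T$ in each case: when $g = 4k$ the rotation $T$ has order $g-1 = 4k-1$, so $T^{4k} = T$; when $g = 4k+1$ the rotation has order $g = 4k+1$, so $T^{4k+2} = T$; when $g = 4k+2$ the rotation has order $g-1 = 4k+1$, so $T^{4k} = T$; and when $g = 4k+3$ the rotation has order $g = 4k+3$, so $T^{4k+2} = T^{-(4k+1)+(4k+2)}\cdot\ldots$ — more simply $4k+2 \equiv 1 \pmod{4k+3}$ shows $T^{4k+2}=T$ once one also accounts for the correct orientation, hence $T^{4k+2} = T$. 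Matching the two parity cases of $m$ against the residue of $g$ modulo $4$ then yields exactly the two formulas in the statement.

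The only genuinely substantive point — and the one I would write out carefully — is the order of the rotation $T$ on $N_g$ in each of the four congruence classes, together with the sign bookkeeping: a priori $T$ could be the rotation by $2\pi/g$ or $2\pi/(g-1)$ (as set up in Section~\ref{S3}, depending on the parity of $g$), and one must check against Figures~\ref{R4K}--\ref{R4K+3} that the listed action on the homology generators $x_i$ is consistent with $T=\rho_2\rho_1$ having the claimed order. Everything else is the two-line group-theoretic identity above. I therefore expect no real obstacle beyond this verification, which is routine given the explicit pictures of $\rho_1$, $\rho_2$, and $T$.
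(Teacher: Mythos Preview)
Your dihedral identity $[T^{m},\rho_1]=T^{2m}$ is correct and is essentially a repackaging of the paper's argument: the paper verifies from the figures that $\rho_2=T^{m}\rho_1T^{-m}$ and then writes $T=\rho_2\rho_1=[T^{m},\rho_1]$, and your computation shows that this conjugation relation is equivalent to $T^{2m-1}=1$. So the two routes agree once the order of $T$ is pinned down.

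The gap is in exactly that last step, for the cases $g=4k+2$ and $g=4k+3$. You quote the orders from Section~\ref{S3} (order $g-1$ when $g$ is even, order $g$ when $g$ is odd), but the rotations used in Proposition~\ref{prop} for these two residue classes are \emph{not} those of Section~\ref{S3}: they are the rotations $T=\rho_2\rho_1$ of Figures~\ref{R4K+2} and~\ref{R4K+3}, whose action on the $x_i$ is recorded just before the proposition. For $g=4k+2$ this $T$ cycles $x_1,\ldots,x_{g-3}$ and fixes three generators, so its order is $g-3=4k-1$; for $g=4k+3$ it cycles $x_1,\ldots,x_{g-2}$ and fixes two generators, so its order is $g-2=4k+1$. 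With these corrected orders your identity gives $T^{4k}=T$ and $T^{4k+2}=T$ respectively, and the proof goes through. With the orders you wrote ($4k+1$ and $4k+3$) one gets $T^{4k}=T^{-1}$ and $T^{4k+2}=T^{-1}$ instead --- your own arithmetic signals this, since $4k+2\equiv -1\pmod{4k+3}$, not $1$. So the ``routine verification'' you flagged is indeed the whole content here, and it needs to be done against Figures~\ref{R4K+2}--\ref{R4K+3} rather than against the Section~\ref{S3} models.
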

\begin{proof}
Let $g=4k$ or $g=4k+2$, then consider the models for $N_g$ in Figure~\ref{R4K} or Figure~\ref{R4K+2}, respectively so that $T=\rho_2\rho_1$. It can be verified that 
\[
\rho_2=T^{2k}\rho_1T^{-2k}.
\]
Therefore, $T=\rho_2\rho_1=T^{2k}\rho_1T^{-2k}\rho_1=[T^{2k},\rho_1]$, which proves $(1)$.

For $(2)$, Let $g=4k+1$ or $g=4k+3$, let us consider the models for $N_g$ in Figure~\ref{R4K+1} or Figure~\ref{R4K+3}, respectively in such a way that $T=\rho_2\rho_1$. Hence, we have
\[
\rho_2=T^{2k+1}\rho_1T^{-(2k+1)}.
\]
Therefore, $T=\rho_2\rho_1=T^{2k+1}\rho_1T^{-(2k+1)}\rho_1=[T^{2k+1},\rho_1]$.
\end{proof}
Now, we give new generating sets for $\mathcal{T}_g$ when $g=4k+2$ and $g=4k+3$.
\begin{figure}[h]
\begin{center}
\scalebox{0.25}{\includegraphics{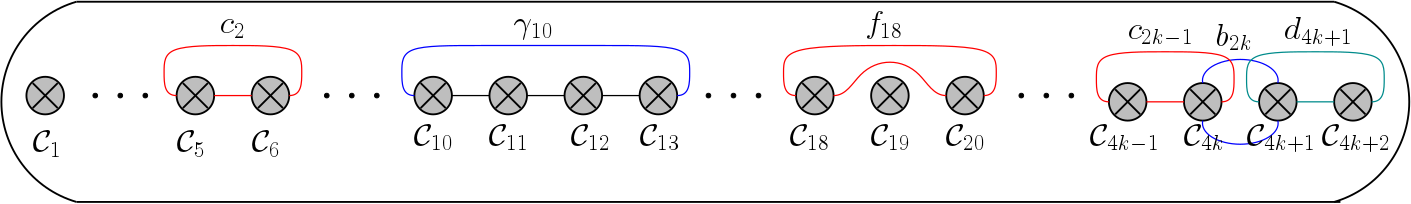}}
\caption{The curves $c_2,\gamma_{10},f_{18},c_{2k-1},b_{2k}$ and $d_{4k+1}$ on $N_g$ for $g=4k+2$.}
\label{Ck+2}
\end{center}
\end{figure}

In the following theorem, we give a generating set for $\mathcal{T}_g$ consisting of three elements when $g=4k+2$, where the first generator $T=\rho_2\rho_1$ is shown in Figure~\ref{R4K+2}. The Dehn twist curves that are contained in the generating set are shown in Figure~\ref{Ck+2}. Note that $T$ satisfies 
\begin{enumerate}
\item[$\bullet$] $T(b_i)=c_i$, $T(c_j)=b_{j+1}$ for $i,j=1,\ldots,2k-2$,
\item[$\bullet$] $T(a_1)=(b_1)$, $T(b_{2k})=b_{2k}$ and $T(d_{4k+1})=d_{4k+1}$.
\end{enumerate}

\begin{theorem}\label{t4k+2}
For $g=4k+2\geq30$, the twist subgroup $\mathcal{T}_g$ is generated by the three elements $T$, $\Gamma_{10}C_{2}^{-1}F_{18}B_{2k}^{-1}$ and $C_{2k-1}D_{4k+1}^{-1}$.
\end{theorem}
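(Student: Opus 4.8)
The plan is to follow the strategy of the proofs of Theorems~\ref{t29} and \ref{t42} as closely as possible. Set $G_1:=\Gamma_{10}C_2^{-1}F_{18}B_{2k}^{-1}$ and $G_2:=C_{2k-1}D_{4k+1}^{-1}$, and let $G$ be the subgroup of $\mathcal{T}_g$ generated by $T$, $G_1$ and $G_2$. Since $D_{4k+1}=D_{g-1}$ and Omori's twists $A_1,A_2,B_1,\dots,B_{2k},C_1,\dots,C_{2k-1},D_{g-1},E$ generate $\mathcal{T}_g$ by Theorem~\ref{thm1}, it is enough to show that all of these Dehn twists lie in $G$; as in Theorems~\ref{t29} and \ref{t42}, this reduces via the techniques of \cite{apy1} to producing $A_1A_2^{-1}$, $B_1B_2^{-1}$, $D_{g-1}$ and $E$ in $G$, since the bootstrap of \cite{apy1} recovers the genuine Dehn twists from the rotation and the two ratios (using that $T$ cyclically permutes $a_1,b_1,c_1,\dots,c_{2k-1}$ and, as in the earlier proofs, also a parallel orbit $\gamma_1,\gamma_2,\dots$ with $\gamma_1=a_2$). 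The one structural novelty compared with the two-generator theorems is that the rotation $T=\rho_2\rho_1$ of Figure~\ref{R4K+2} \emph{fixes} the two curves $b_{2k}$ and $d_{4k+1}$, with $T(c_{2k-1})=a_1$; because the last factor $B_{2k}^{-1}$ of $G_1$ can then not be moved around by $T$, the third generator $G_2=C_{2k-1}D_{4k+1}^{-1}$ is built in precisely to tie the $T$-fixed curves $b_{2k},d_{4k+1}$ to the $T$-orbit of $a_1$.

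First I would exploit $G_2$. Since $T$ fixes $d_{4k+1}$ and sends $c_{2k-1}$ to $a_1$, conjugation by $T$ gives $TG_2T^{-1}=A_1D_{4k+1}^{-1}\in G$, hence
\[
G_2\bigl(TG_2T^{-1}\bigr)^{-1}=C_{2k-1}A_1^{-1}\in G .
\]
Conjugating $C_{2k-1}A_1^{-1}$ by successive powers of $T$ then puts every consecutive ratio along the $a_1$-orbit into $G$, and suitably composing two of them gives $B_1B_2^{-1}\in G$. Moreover, once $C_{2k-1}\in G$ (automatic once $A_1\in G$, as $c_{2k-1}$ lies in the $a_1$-orbit), the relation $G_2=C_{2k-1}D_{4k+1}^{-1}$ gives $D_{g-1}=D_{4k+1}\in G$.

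Next I would run the computation of Theorem~\ref{t29} on $G_1$ essentially verbatim. Since $T$ fixes $b_{2k}$, every conjugate $T^mG_1T^{-m}$ again ends in $B_{2k}^{-1}$, and as $\gamma_{10},c_2,f_{18},b_{2k}$ are pairwise disjoint the twists involved commute; the same chain of commutators $(XY)Z(XY)^{-1}$ as in Theorem~\ref{t29} (with its last factor $C_{12}^{-1}$ replaced by the now-inert $B_{2k}^{-1}$, and with $G_2$ used wherever that proof exploited the mobility of $C_{12}$) then yields in turn $\Gamma_6C_2^{-1}$, $C_2B_2^{-1}$, all $C_iB_i^{-1}$, $C_3B_4^{-1}$, $\Gamma_8B_4^{-1}=(\Gamma_8C_1^{-1})(C_1C_3^{-1})(C_3B_4^{-1})$, and finally $\Gamma_1A_1^{-1}=A_2A_1^{-1}\in G$ after conjugating by a power of $T$. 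Now $A_1A_2^{-1},B_1B_2^{-1},T\in G$, so the bootstrapping argument of \cite{apy1} puts all of $A_1,A_2,B_1,\dots,B_{2k-1},C_1,\dots,C_{2k-1}$ into $G$; then $\Gamma_{10},C_2\in G$ as $T$-conjugates of $A_2,A_1$, from $G_2$ we get $D_{g-1}\in G$, from the word $G_1$ together with a curve-relation argument of the type used at the end of the proof of Theorem~\ref{t42} we get $F_{18}\in G$ and hence the remaining twist $B_{2k}\in G$, and finally $F_1=T^{-17}F_{18}T^{17}\in G$ and $E=A_1F_1A_1^{-1}\in G$. Thus $G$ contains all of Omori's generators, so $G=\mathcal{T}_g$.

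The whole content of the argument, and the only genuine obstacle, is the combinatorial bookkeeping: tracking the images of $\gamma_{10},c_2,f_{18},b_{2k},c_{2k-1},d_{4k+1}$ under powers of $T$ and choosing the conjugating words so that each commutator modifies exactly one factor of a four-twist word, just as in Theorems~\ref{t29} and \ref{t42}. I expect this to succeed because the indices $10,2,18,2k,2k-1,4k+1$ are chosen so that the needed curve coincidences occur under small powers of $T$, and the hypothesis $g=4k+2\ge 30$ leaves enough room on $N_g$ for these configurations (with the usual parenthetical variants handling the smallest values of $k$, exactly as in the proofs of Theorems~\ref{t29} and \ref{t42}).
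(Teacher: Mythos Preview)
There is a genuine gap, and it stems from a wrong assumption about the rotation in the $g=4k+2$ model. You claim that $T$ sends $c_{2k-1}$ to $a_1$, and you build your whole first step on $TG_2T^{-1}=A_1D_{4k+1}^{-1}$. But in the model of Figure~\ref{R4K+2} the $T$-cycle through $a_1$ is recorded only as $T(a_1)=b_1$, $T(b_i)=c_i$, $T(c_j)=b_{j+1}$ for $i,j\le 2k-2$; the curve $c_{2k-1}$ lies against the three $T$-fixed crosscaps and is \emph{not} in this orbit. This is why the paper's proof, after the bootstrap, obtains only $C_j$ for $j\le 2k-2$ from the $T$-action and must recover $C_{2k-1}$ by a separate argument. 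Consequently your conjugation $TG_2T^{-1}$ does not equal $A_1D_{4k+1}^{-1}$, your ``easy'' route to $B_1B_2^{-1}$ through $G_2$ collapses, and your later claim that the bootstrap yields $C_1,\dots,C_{2k-1}$ overcounts by one.

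The repair is simply to reverse the order of your two steps, which is exactly what the paper does. Run the $G_1$-computation first: since $T(b_{2k})=b_{2k}$, every $T$-conjugate of $G_1$ keeps the inert factor $B_{2k}^{-1}$, and the chain of conjugations from Theorem~\ref{t29} goes through verbatim with no input from $G_2$, yielding \emph{both} $B_1B_2^{-1}$ and $A_1A_2^{-1}$. The bootstrap of \cite{apy1} then gives $A_1,A_2,B_1,\dots,B_{2k-1},C_1,\dots,C_{2k-2}$. Only now bring in $G_2$, and not via $T$: the element $(C_{2k-1}D_{4k+1}^{-1})B_{2k-1}\in G$ sends $(c_{2k-1},d_{4k+1})$ to $(b_{2k-1},d_{4k+1})$, so $B_{2k-1}D_{4k+1}^{-1}\in G$, whence $D_{g-1}=D_{4k+1}\in G$ and $C_{2k-1}\in G$. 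Your final paragraph (extracting $F_{18}$ from $G_1$ via a curve relation as in Theorem~\ref{t42}, hence $B_{2k}$, $F_1$ and $E=A_1F_1A_1^{-1}$) is then correct as written.
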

\begin{proof}
Let $K_1:=\Gamma_{10}C_{2}^{-1}F_{18}B_{2k}^{-1}$ and let us denote by $K$ the subgroup of $\mathcal{T}_g$ generated by the elements $T$, $K_1$ and $C_{2k-1}D_{4k+1}^{-1}$. First, we need to show that $A_1A_2^{-1}$ and $B_1B_2^{-1}$ are contained in the subgroup $K$. This implies that $A_1,A_2$, $B_i,C_j \in K$ for $i=1,\ldots,2k-1$ and $j=1,\ldots,2k-2$ by the proof of Theorem~\ref{t1} and the action of $T$ on $N_g$. To finish the proof, we also need to prove that the subgroup $K$ contains the Dehn twists $C_{2k-1}$, $B_{2k}$, $D_{4k+1}$ and $E$.

Let
\begin{eqnarray*}
K_2&:=&T^{-4}K_1T^{4}=\Gamma_{6}A_1^{-1}F_{14}B_{2k}^{-1} \in K \textrm{ and }\\
K_3&:=&(K_2K_1^{-1})K_2(K_2K_1^{-1})^{-1}=C_2A_1^{-1}F_{14}B_{2k}^{-1} \in K.
\end{eqnarray*}
Then, $K_2K_3^{-1}=\Gamma_6C_2^{-1}$ and also $T^{4}(\Gamma_6C_2^{-1})T^{-4}=\Gamma_{10}C_4^{-1}\in K$.

We also have the element
\begin{eqnarray*}
K_4&:=&(C_4\Gamma_{10}^{-1})K_1=C_4C_{2}^{-1}F_{18}B_{2k}^{-1}\in K,\\
K_5&:=&T^{-1}K_4T=B_4B_2^{-1}F_{17}B_{2k}^{-1}\in K \textrm{ and }\\
K_6&:=&(K_4K_5)K_3(K_4K_5)^{-1}=B_2A_1^{-1}F_{14}B_{2k}^{-1} \in K.
\end{eqnarray*}
Thus, $K_3K_6^{-1}=C_2B_2^{-1}\in K$ implying that $T(B_2C_2^{-1})T^{-1}=C_2B_3^{-1} \in K$. We then get 
\[
B_1B_2^{-1}=T^{-2}\big((B_2C_2^{-1})(C_2B_3^{-1})\big)T^2=T^{-2}(B_2B_3^{-1})T^2 \in K.
\]

We also have the following elements:
\begin{eqnarray*}
K_7&:=&T^{-2}K_4T^2=C_3C_1^{-1}F_{16}B_{2k}^{-1},\\
K_8&:=&T^4K_7T^{-4}=C_5C_3^{-1}F_{20}B_{2k}^{-1} \textrm{ and }\\
K_9&:=&(K_7K_5)K_8(K_7K_5)^{-1}=C_5B_4^{-1}F_{20}B_{2k}^{-1}
\end{eqnarray*}
are all in $K$. 
Similarly, the subgroup $K$ contains the following elements:
\begin{eqnarray*}
K_{10}&:=&T^{-4}K_9T^4=C_3B_2^{-1}F_{16}B_{2k}^{-1},\\
K_{11}&:=&K_{10}(B_2C_2^{-1})=C_2^{-1}C_3F_{16}B_{2k}^{-1} \textrm{ and }\\
K_{12}&:=&T^{-3}K_{10}T^3=B_1^{-1}B_2F_{13}B_{2k}^{-1}.
\end{eqnarray*}
 Hence, $K_{12}(B_1B_2^{-1})=F_{13}B_{2k}^{-1}\in K$. This implies that $T^{5}(F_{13}B_{2k}^{-1})T^{-5}=F_{18}B_{2k}^{-1}$ is contained in $K$. Using this, we conclude that $K_1(B_{2k}F_{18}^{-1})=\Gamma_{10}C_2^{-1} \in K$. From this, the subgroup $K$ also contains the element $T^{-2}(\Gamma_{10}C_2^{-1} )T^{2}=\Gamma_8C_1^{-1}$.

It follows from $(C_4 \Gamma_{10}^{-1})(\Gamma_{10}C_2^{-1})=C_4C_2^{-1}\in K$ that the elements $C_3C_1^{-1}$ and $C_5C_3^{-1}$ belong to $K$ by conjugating with $T^{-2}$ and $T^2$, respectively. 

Moreover, we have $(B_4C_3^{-1})(C_3C_5^{-1})=B_4C_5^{-1}\in K$. Hence,
\[
(\Gamma_8C_1^{-1})(C_1C_3^{-1})(C_3C_5^{-1})(C_5B_4^{-1})=\Gamma_8B_4^{-1}\in K,
\]
which implies that $T^{-7}(\Gamma_8B_4^{-1})T^7=A_2A_1^{-1}\in K$. We conclude that $A_1,A_2$, $B_i,C_j \in K$ for $i=1,\ldots,2k-1$ and $j=1,\ldots,2k-2$ by the proof of Theorem~\ref{t1}.

The subgroup $K$ contains the element $(C_{2k-1}D_{4k+1}^{-1})(B_{2k-1})$ which maps the curves $(c_{2k-1},d_{4k+1})$ to $(b_{2k-1},d_{4k+1})$. Hence, the subgroup $K$ contains $B_{2k-1}D_{4k+1}^{-1}$ implying that $D_{4k+1}=D_{g-1}\in K$ by the fact that $B_{2k-1}\in K$. Also, we obtain the element $C_{2k-1} \in K$.

Moreover, since the subgroup $K$ has the elements $F_{18}B_{2k}^{-1}$ and $D_{4k+1}$ and the element $(F_{18}B_{2k}^{-1})(D_{4k+1}^{-1})$ sends the curves $(f_{18},b_{2k})$ to $(f_{18},d_{4k+1})$, we get the element $F_{18}D_{4k+1}^{-1}\in K$. We then conclude that $F_{18}$ belongs to $K$, which also implies that $B_{2k}$ is contained in $K$. Finally, since $T^{-17}F_{18}T^{17}=F_1\in K$, the element $A_1F_1A_1^{-1}=E$ is in $K$. This completes the proof.
\end{proof}
\begin{figure}[h]
\begin{center}
\scalebox{0.22}{\includegraphics{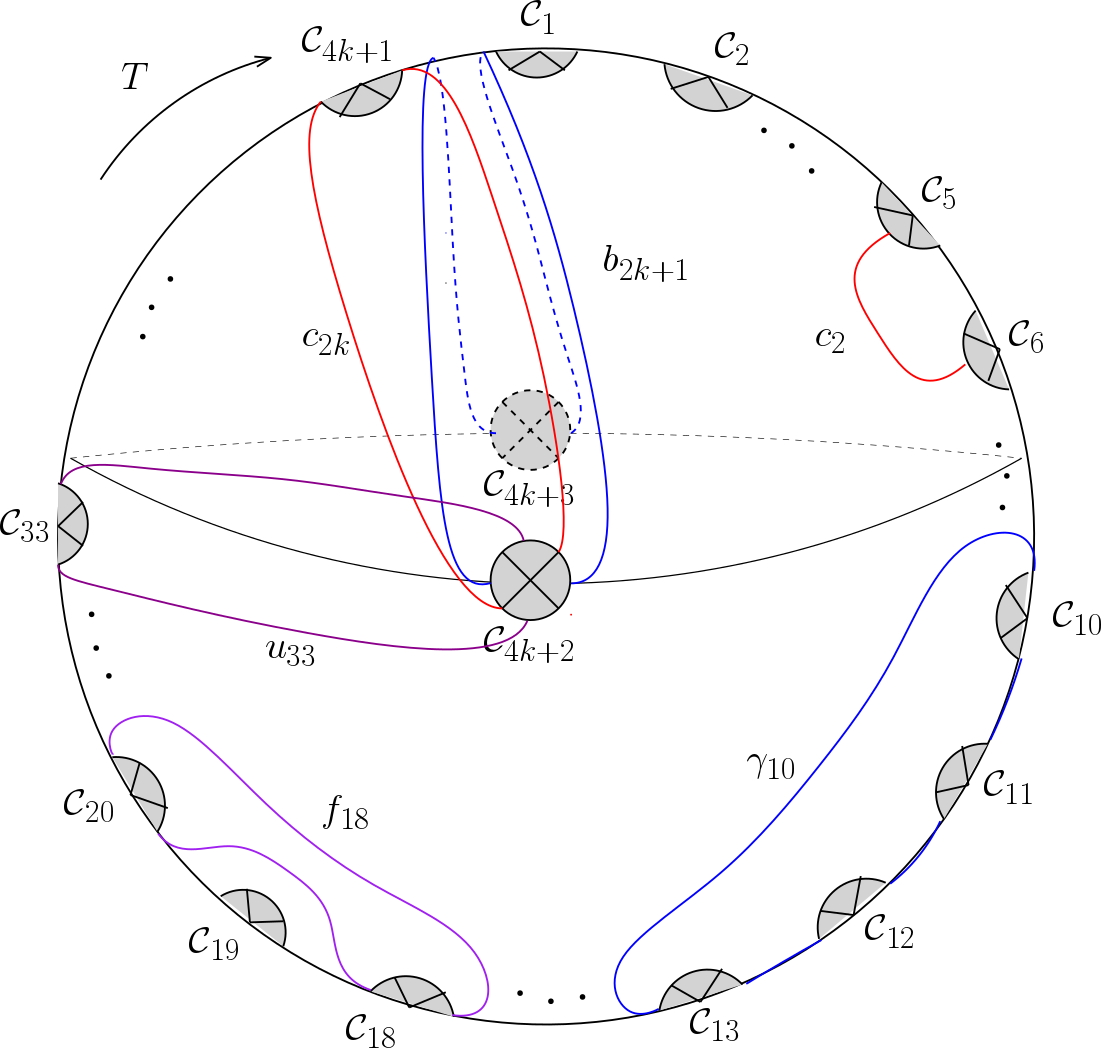}}
\caption{The rotation $T$ and the curves $c_2,\gamma_{10},f_{18},u_{33},c_{2k}$ and $b_{2k+1}$ on $N_g$ for $g=4k+3$.}
\label{Ck+3}
\end{center}
\end{figure}

The following theorem provides a generating set for $\mathcal{T}_g$ consisting of three mapping classes when $g=4k+3$, 
where the first generator $T=\rho_2\rho_1$ is shown in Figure~\ref{R4K+3}. The other generators contain the Dehn twist about 
the curves shown in Figure~\ref{Ck+3} (note that $c_{2k}=u_{4k+1}$). In this case, the rotation $T$ satisfies 
\begin{enumerate}
\item[$\bullet$]  $T(a_1)=(b_1)$, $T(b_i)=c_i$, $T(c_i)=b_{i+1}$ for $i=1,\ldots,2k-1$ and 
\item[$\bullet$]$T(u_j)=u_{j+1}$ for $j=1,\ldots,4k$ and $T(u_{4k+1})=u_{1}$.
\end{enumerate}
\begin{theorem}\label{t4k+3}
For $g=4k+3\geq43$, the twist subgroup $\mathcal{T}_g$ is generated by the three elements $T$, $\Gamma_{10}C_{2}^{-1}F_{18}U_{33}^{-1}$ and $B_{2k+1}A_{1}^{-1}$.
\end{theorem}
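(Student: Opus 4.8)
The plan is to mimic closely the proof of Theorem~\ref{t4k+2}, adapting it to the model of $N_g$ for $g=4k+3$ shown in Figure~\ref{Ck+3}. By Theorem~\ref{t1}, since $\mathcal{T}_g$ is generated (for $g=2r+1$ with $r=2k+1\geq 21$) by $T$, $A_1A_2^{-1}$, $B_1B_2^{-1}$ and $E$, it suffices to show that the subgroup $L$ generated by $T$, $L_1:=\Gamma_{10}C_2^{-1}F_{18}U_{33}^{-1}$ and $B_{2k+1}A_1^{-1}$ contains $A_1A_2^{-1}$, $B_1B_2^{-1}$ and $E$. The strategy is exactly as before: the curves $\gamma_{10},c_2,f_{18}$ and their $T$-translates all lie in the ``generic'' part of the surface on which $T$ acts the same way as in Theorem~\ref{t29}, while the fourth factor now involves $u_{33}$, which $T$ permutes cyclically among the $u_j$ (with $T(u_{4k+1})=u_1$) rather than fixing it. So the combinatorial bookkeeping that in Theorem~\ref{t29} used the fixed curve to ``cancel'' the unwanted factors must here be replaced by an argument that first isolates $U_{33}$ as a separate Dehn twist using the extra generator $B_{2k+1}A_1^{-1}$.

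Concretely, I would proceed in three stages. First, repeat verbatim the chain of commutator and conjugation manipulations from the proof of Theorem~\ref{t4k+2} (the elements $K_2$ through $K_{12}$), with $B_{2k}$ replaced throughout by $U_{33}$: conjugating $L_1$ by powers of $T$ and forming suitable commutators produces $\Gamma_6 A_1^{-1}F_{14}U_{33}^{-1}$, then $C_2A_1^{-1}F_{14}U_{33}^{-1}$, hence $\Gamma_6C_2^{-1}$, hence $\Gamma_{10}C_4^{-1}$, hence $C_4C_2^{-1}F_{18}U_{33}^{-1}$ and its translates $B_4B_2^{-1}F_{17}U_{33}^{-1}$, $C_3C_1^{-1}F_{16}U_{33}^{-1}$, etc. As in Theorem~\ref{t4k+2} this yields $C_2B_2^{-1}\in L$, hence $B_1B_2^{-1}\in L$ after conjugation, and eventually $F_{13}U_{33}^{-1}\in L$; applying $T^{5}$ and noting $T^{5}(f_{13},u_{33})=(f_{18},u_{38})$ we must be a bit more careful than in the $4k+2$ case since $U_{33}$ is not $T$-fixed, so instead I would extract $F_{18}U_{33}^{-1}$ by conjugating $F_{13}U_{33}^{-1}$ by $T^{5}$ to get $F_{18}U_{38}^{-1}$ and separately obtaining $U_{38}U_{33}^{-1}$ from translates of some $U_{j+1}U_j^{-1}$ difference. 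Second, from $L_1(U_{33}F_{18}^{-1})=\Gamma_{10}C_2^{-1}\in L$ we recover, exactly as before, $\Gamma_8C_1^{-1}$, then $C_4C_2^{-1}$, $C_3C_1^{-1}$, $C_5C_3^{-1}$, $B_4C_5^{-1}$ and finally $\Gamma_8B_4^{-1}$, whose $T^{-7}$-conjugate is $A_2A_1^{-1}\in L$; combined with $B_1B_2^{-1}$ and $T$, the proof of Theorem~\ref{t1} gives $A_1,A_2,B_i,C_j\in L$ for $i=1,\dots,2k+1$, $j=1,\dots,2k$.

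Third, and this is where the extra generator $B_{2k+1}A_1^{-1}$ finally enters: since $A_1\in L$ we get $B_{2k+1}=(B_{2k+1}A_1^{-1})A_1\in L$, and since $T(c_{2k})=b_{2k+1}$ with $C_{2k}\in L$ this is automatic anyway; more importantly, recalling $c_{2k}=u_{4k+1}$ we obtain $U_{4k+1}\in L$, and by conjugating by powers of $T$ (which cyclically permutes $u_1,\dots,u_{4k+1}$) we get every $U_j\in L$, in particular $U_{33}\in L$ and $U_{38}\in L$, which retroactively justifies the extraction of $F_{18}U_{33}^{-1}$ above and gives $F_{18}=(F_{18}U_{33}^{-1})U_{33}\in L$; then $F_1=T^{-17}F_{18}T^{17}\in L$ and $E=A_1F_1A_1^{-1}\in L$, finishing the proof. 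The main obstacle, and the only place the argument genuinely differs from Theorem~\ref{t4k+2}, is handling the factor $U_{33}$ along the cyclic $u$-chain rather than at a $T$-fixed curve: one has to be sure that before $U_{33}$ itself is shown to lie in $L$ one only ever manipulates it in the combinations $X\,U_{33}^{-1}$ (so that the ambiguity cancels in commutators), and that a difference of the form $U_{j+1}U_j^{-1}$ is produced early enough — this should fall out of comparing $T^{-4}K_9T^{4}$-type elements just as $D_{k+1}D_k^{-1}$ did in Theorem~\ref{t42}, using that the relevant pairs of $f$-curves differ by translating one $u$-index. I expect no essential difficulty beyond this careful bookkeeping, and the genus bound $g\geq 43$ is exactly what is needed so that all the indices $10,18,33,\dots$ appearing above stay within the valid range, paralleling the bound $g\geq 30$ in Theorem~\ref{t4k+2} and $g\geq 42$ in Theorem~\ref{t42}.
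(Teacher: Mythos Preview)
Your plan follows the wrong template. The rotation $T$ in the $g=4k+3$ model of Figure~\ref{R4K+3} has order $g-2=4k+1$ and satisfies $T(u_j)=u_{j+1}$, so $U_{33}$ is \emph{not} $T$-invariant. Consequently the calculations you copy from Theorem~\ref{t4k+2} are literally false as written: $T^{-4}L_1T^{4}=\Gamma_6A_1^{-1}F_{14}U_{29}^{-1}$, not $\Gamma_6A_1^{-1}F_{14}U_{33}^{-1}$; $T^{-1}(C_4C_2^{-1}F_{18}U_{33}^{-1})T=B_4B_2^{-1}F_{17}U_{32}^{-1}$, not $B_4B_2^{-1}F_{17}U_{33}^{-1}$; and so on. Every line of your Stage~1 carries a drifting $U$-index, and the later commutator steps cannot be carried out until these indices are normalised. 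You notice this only at the $F_{13}\to F_{18}$ step and then say the needed relations $U_{j+1}U_j^{-1}\in L$ ``should fall out of comparing $T^{-4}K_9T^{4}$-type elements just as $D_{k+1}D_k^{-1}$ did in Theorem~\ref{t42}''. But in Theorem~\ref{t42} those relations are the \emph{first} thing established, via the $T^{11}$-conjugate and the two commutators $H_3,H_5$, precisely so that all subsequent $T$-conjugates can be corrected back to a fixed $D_{33}^{-1}$. The paper's proof of Theorem~\ref{t4k+3} does exactly the same: it follows Theorem~\ref{t42}, not Theorem~\ref{t4k+2}, first producing $U_{34}U_{33}^{-1}$ (hence all $U_iU_j^{-1}$) from $L_2=T^{11}L_1T^{-11}$ and the analogues of $H_3,H_4,H_5$, and only then runs the chain of manipulations with the $U_{33}^{-1}$ factor held fixed throughout.

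There is a second, independent error in Stages~2 and~3. With the $4k+3$ rotation one has $T(b_i)=c_i$ and $T(c_i)=b_{i+1}$ only for $i\le 2k-1$, and $c_{2k}=u_{4k+1}$ with $T(u_{4k+1})=u_1$; in particular $T(c_{2k})\neq b_{2k+1}$. Thus the argument from the proof of Theorem~\ref{t1} applied to this $T$ yields only $A_1,A_2,B_i,C_i$ for $i\le 2k-1$ (as the paper states), not up to $B_{2k+1}$ and $C_{2k}$ as you claim. The Dehn twist $C_{2k}$ is obtained separately by showing $U_{33}\in L$ and conjugating by $T^{4k-32}$, and $B_{2k+1}$ comes only from the extra generator $B_{2k+1}A_1^{-1}$; your remark that ``$C_{2k}\in L$ is automatic anyway'' is therefore unfounded. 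In short: mimic Theorem~\ref{t42}, establish $U_iU_j^{-1}\in L$ at the outset, and keep track of which curves actually lie in the $T$-orbit of $a_1$.
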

\begin{proof}
Let  $L_1$ denote the mapping class $\Gamma_{10}C_{2}^{-1}F_{18}U_{33}^{-1}$ and $L$ be the subgroup of $\mathcal{T}_{g}$ 
generated by the elements $T$, $L_1$ and $B_{2k+1}A_{1}^{-1}$. It follows from Theorem~\ref{t1} that we need to show that the subgroup 
$L$ contains the elements $A_1A_{2}^{-1}$, $B_1B_{2}^{-1}$, $C_{2k}=U_{4k+1}$, $B_{2k+1}$ and $E$.

We will now mainly follow the steps of the proof of Theorem~\ref{t42}.  We have the elements
\[
L_2:=T^{11}L_1T^{-11}=\Gamma_{21} B_8^{-1}F_{29}U_{44}^{-1}  \textrm{ if } g\geq47,
\]
\begin{eqnarray*}
(L_2:=T^{11}L_1T^{-11}=\Gamma_{21}B_8^{-1}F_{29}U_{3}^{-1} \in H \textrm{ if } g=43),
\end{eqnarray*}
\begin{eqnarray*}
L_3&:=&(L_2L_1)L_2(L_2L_1)^{-1}=\Gamma_{21}B_8^{-1}F_{29}U_{33}^{-1} \\
L_4&:=&TL_4T^{-1}=\Gamma_{22}C_8^{-1}F_{30}U_{34}^{-1} \textrm{ and }\\
L_5&:=&(L_4L_1)L_4(L_4L_1)^{-1}=\Gamma_{22}C_8^{-1}F_{30}U_{33}^{-1},
\end{eqnarray*}
which are all contained in the subgroup $L$. Thus, we get $L_4^{-1}L_5=U_{34} U_{33}^{-1}\in L$ which implies that, by conjugating with powers of $T$,  
the elements $U_{i+1}U_{i}^{-1} \in L$.  It follows  that $U_{i}U_{j}^{-1} \in L$ for all $i, j$.
By applying the same steps in the proof of Theorem~\ref{t42} by taking $U_i$ instead of $D_i$, we get the elements 
$A_2A_1^{-1}$, $B_1B_2^{-1}$ and $F_{18}U_{33}^{-1}$ in the subgroup $L$. This implies that the generators $A_1,A_2, B_i, C_i$ for $i=1,\ldots,2k-1$. Then, it follows from
\[
F_{18}^{-1}U_{33}B_{16}(f_{18},u_{33})=(f_{18},b_{16})
\]
that $F_{18}B_{16}^{-1} \in L$. Then we have $F_{18}\in L$ since $B_{16} \in L$. Thus, we get the elements $F_1=T^{-17}F_{18}T^{17} \in L$ and $A_1F_1A_1^{-1}=E\in L$. Moreover, $U_{33}=F_{18}(F_{18}^{-1}U_{33}) \in L$, which implies that the element $C_{2k}\in L$ by conjugating $U_{33}$ with $T^{4k-32}$. Finally, since $A_1$ and $B_{2k+1}A_1^{-1}$ belong to $L$, $B_{2k+1}\in L$, which finishes the proof.
\end{proof}
If one wants to decrease $g$, then the number of generators increases. We try to find minimal number of generators for $g=4k+2\geq 10$ or $g=4k+3\geq 7$.
\begin{theorem}\label{t4k+2.10}
For $g=4k+2\geq10$, the twist subgroup $\mathcal{T}_g$ is generated by the four elements $T$, $A_1A_2^{-1}$, $F_1B_{2k}^{-1}$ and $C_{2k-1}D_{4k+1}^{-1}$.
\end{theorem}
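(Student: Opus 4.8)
The plan is to mimic the structure of the proof of Theorem~\ref{t4k+2} but with the more primitive generators $A_1A_2^{-1}$ and $F_1B_{2k}^{-1}$ replacing the single ``composite'' generator $\Gamma_{10}C_2^{-1}F_{18}B_{2k}^{-1}$; the extra generator $C_{2k-1}D_{4k+1}^{-1}$ plays exactly the same role as in Theorem~\ref{t4k+2}. Write $H$ for the subgroup of $\mathcal{T}_g$ generated by $T$, $A_1A_2^{-1}$, $F_1B_{2k}^{-1}$ and $C_{2k-1}D_{4k+1}^{-1}$. By Theorem~\ref{t1}(2) it suffices to show that $A_1A_2^{-1}$, $B_1B_2^{-1}$, $D_{g-1}=D_{4k+1}$ and $E$ all lie in $H$; the first is a generator by hypothesis, so the real work is to produce $B_1B_2^{-1}$, $D_{4k+1}$ and $E$.

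First I would produce $B_1B_2^{-1}$ by essentially the computation in the proof of Theorem~\ref{t9odd}: using the action of $T$ on Figure~\ref{TE} one has $T^{-3}$ carrying $(f_1,b_2)$ to $(f_{g-3},a_1)$ (the even-genus analogue of the curve bookkeeping there), so $T^{-3}(F_1B_2^{-1})T^3 = F_{g-3}A_1^{-1}\in H$; then the curve identity $(A_1F_{g-3}^{-1})(F_1B_2^{-1})(f_{g-3},a_1)=(f_{g-3},f_1)$ gives $F_{g-3}F_1^{-1}\in H$, hence $F_{g-3}B_2^{-1}\in H$, hence $B_2A_1^{-1}\in H$ and $B_2A_2^{-1}=(B_2A_1^{-1})(A_1A_2^{-1})\in H$. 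Conjugating by a small power of $T$ converts this to a relation of the form $B_1\Gamma_{g-1}^{-1}\in H$ (matching the odd-genus argument), and then the identity $(A_1B_2^{-1})(B_1\Gamma_{g-1}^{-1})(a_1,b_2)=(b_1,b_2)$ together with $A_1B_2^{-1}\in H$ yields $B_1B_2^{-1}\in H$. At this point the proof of Theorem~\ref{t1} (from \cite{apy1}) applies, so $A_1,A_2,B_i$ for $i=1,\dots,2k-1$ and $C_j$ for $j=1,\dots,2k-2$ all lie in $H$; in particular $B_{2k-1}\in H$.

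Next I would extract $D_{4k+1}$ and $B_{2k}$ exactly as in the last two paragraphs of the proof of Theorem~\ref{t4k+2}: since $B_{2k-1}\in H$ and $(C_{2k-1}D_{4k+1}^{-1})(B_{2k-1})$ maps $(c_{2k-1},d_{4k+1})$ to $(b_{2k-1},d_{4k+1})$, we get $B_{2k-1}D_{4k+1}^{-1}\in H$, hence $D_{4k+1}=D_{g-1}\in H$ and also $C_{2k-1}\in H$. Then $F_1\in H$ follows, because $B_{2k}$ can be reached from $F_1B_{2k}^{-1}$ once $F_1$ is known — so I would instead first get $F_1$ directly: from $B_2A_1^{-1}$ and the earlier chain one already has $A_1B_2^{-1}\in H$, and combined with $F_1B_2^{-1}\in H$ this gives $F_1A_1^{-1}=(F_1B_2^{-1})(B_2A_1^{-1})\in H$, whence $F_1\in H$ since $A_1\in H$. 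Then $B_{2k}=(F_1B_{2k}^{-1})^{-1}F_1\cdot(\text{conjugate appropriately})$ — more cleanly, $F_{2k}\ldots$; in any case $B_{2k}\in H$ follows as in Theorem~\ref{t4k+2}. Finally $E=A_1F_1A_1^{-1}\in H$, completing the proof that $H=\mathcal{T}_g$.

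The main obstacle is purely the curve bookkeeping for the even-genus model in Figure~\ref{TE}: one must verify that $T^{-3}$ really does send $(f_1,b_2)$ to $(f_{g-3},a_1)$ and that the analogous conjugation turning $B_2A_2^{-1}$ into a $B_1$-versus-$\gamma$ relation goes through, since the even case has the exceptional fixed curves $x_{g-2},x_{g-1},x_g$ under $T$ and an extra Dehn twist $D_{g-1}$ in the generating set. I expect no genuinely new idea is needed beyond combining the $F_1B_{2k}^{-1}$ bootstrap of Theorem~\ref{t9odd}/\ref{t8even} with the $C_{2k-1}D_{4k+1}^{-1}$ trick of Theorem~\ref{t4k+2}, but care is required to check that the lower bound $g=4k+2\geq 10$ (rather than $\geq 30$) is genuinely enough — i.e.\ that all the curve indices appearing ($f_{g-3}$, $b_{2k-1}$, etc.) are in the valid range and the relevant curves are disjoint or intersect as claimed for every such $g$.
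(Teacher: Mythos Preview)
Your approach has a genuine gap stemming from confusion about which rotation $T$ is in play and about the subscript on the generator. The $T$ in this theorem (as in Theorem~\ref{t4k+2}) is the rotation $\rho_2\rho_1$ of Figure~\ref{R4K+2}, not the one in Figure~\ref{TE}; its decisive feature is that it \emph{fixes} $b_{2k}$, i.e.\ $T(b_{2k})=b_{2k}$. You invoke ``$T^{-3}$ carrying $(f_1,b_2)$ to $(f_{g-3},a_1)$'' and then conjugate ``$F_1B_2^{-1}$'', but the generator is $F_1B_{2k}^{-1}$, not $F_1B_2^{-1}$, and under the correct $T$ the curve $b_{2k}$ does not move at all. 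So the Theorem~\ref{t9odd}--style bootstrap you sketch does not apply as written: you are tracking the wrong curve under the wrong rotation. (Your own closing remark about ``the exceptional fixed curves $x_{g-2},x_{g-1},x_g$'' already describes the $g=4k+2$ rotation of Figure~\ref{R4K+2}, not Figure~\ref{TE}, so the confusion is visible in the proposal itself.)

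The paper's proof exploits $T(b_{2k})=b_{2k}$ directly and is shorter than what you outline. Conjugating $F_1B_{2k}^{-1}$ by $T^3$ gives $F_4B_{2k}^{-1}$; then $(A_1A_2^{-1})(B_{2k}F_4^{-1})$ carries $(a_1,a_2)$ to $(a_1,f_4)$, so $A_1F_4^{-1}\in K$ and hence $A_1B_{2k}^{-1}\in K$. Conjugating this by $T$ and $T^3$ (again using $T(b_{2k})=b_{2k}$) yields $B_1B_{2k}^{-1}$ and $B_2B_{2k}^{-1}$, whose product is $B_1B_2^{-1}$. From here your outline is essentially correct: Theorem~\ref{t1} produces $A_1,A_2,B_i,C_j$, whence $B_{2k}$, $F_1$ and $E=A_1F_1A_1^{-1}$ follow, and the $C_{2k-1}D_{4k+1}^{-1}$ trick gives $D_{4k+1}$ and $C_{2k-1}$ exactly as you say. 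The point you missed is that the $b_{2k}$-fixing property of this particular $T$ is precisely what makes $F_1B_{2k}^{-1}$ (rather than $F_1B_2^{-1}$) the natural generator here and renders the intermediate $F_{g-3}$, $\Gamma_{g-1}$ manoeuvres unnecessary.
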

\begin{proof}
Let $K$ be the subgroup of $\mathcal{T}_g$ generated by the elements $T$, $A_1A_2^{-1}$, $F_1B_{2k}^{-1}$ and $C_{2k-1}D_{4k+1}^{-1}$. As in the proof of Theorem~\ref{t4k+2}, we need to prove that the subgroup $K$ contains the elements $B_1B_2^{-1}$, $C_{2k-1}$, $B_{2k}$, $D_{4k+1}$ and $E$. Recall that the element $T=\rho_2\rho_1$ satisfies $T(b_{2k})=b_{2k}$, where $\rho_1$ and $\rho_2$ are as shown in Figure~\ref{R4K+2}. 

It is easy to check that $T^3(f_1,b_{2k})=(f_4,b_{2k})$. Since $F_1B_{2k}^{-1} \in K$, we get the element
\[
F_4B_{2k}^{-1}=T^3(F_1B_{2k}^{-1})T^{-3}
\]
is contained in $K$. Also, it can be verified that the diffeomorphism $(A_1A_2^{-1})(B_{2k}F_4^{-1})$ maps the curves $(a_1,a_2)$ to $(a_1,f_4)$ so that $K$ contains the element $A_1F_4^{-1}$. From this, we have
\[
A_1B_{2k}^{-1}=(A_1F_4^{-1})(F_4B_{2k}^{-1})\in K.
\]
Now, it follows from $T(a_1,b_{2k})=(b_1,b_{2k})$ and $T^2(b_1,b_{2k})=(b_2,b_{2k})$ that the elements $B_1B_{2k}^{-1}$ and $B_2B_{2k}^{-1}$ are in $K$.
 Hence, we have 
\[
B_1B_2^{-1}=(B_1B_{2k}^{-1})(B_{2k}B_2^{-1}) \in K.
\]
Now , since $T$, $A_1A_{2}^{-1}$, $B_1B_{2}^{-1}$ are in $K$, the proof of Theorem~\ref{t1} implies that the generators $A_1$, $A_2$, $B_1,\ldots,B_{2k-1}$ and $C_1,\ldots,C_{2k-2}$ are all in $K$. Thus, we have
\begin{eqnarray*}
B_{2k}&=&(B_{2k}B_1^{-1})B_1\in K \textrm{ and}\\
E&=&A_1F_1A_1^{-1}=A_1(F_1B_{2k}^{-1})(B_{2k})A_1^{-1} \in K.
\end{eqnarray*}
On the other hand, the subgroup $K$ contains the element $(C_{2k-1}D_{4k+1}^{-1})(B_{2k-1})$ that maps the curves $(c_{2k-1},d_{4k+1})$ to $(b_{2k-1},d_{4k+1})$. This implies that the element $B_{2k-1}D_{4k+1}^{-1} \in K$. Therefore, the elements
\begin{eqnarray*}
D_{4k+1}&=&(D_{4k+1}B_{2k-1}^{-1})(B_{2k-1}) \textrm{ and}\\
C_{2k-1}&=&(C_{2k-1}D_{4k+1}^{-1})(D_{4k+1}),
\end{eqnarray*}
are contained in $K$, which completes the proof.
\end{proof}

\begin{theorem}\label{t4k+3.7}
For $g=4k+3\geq7$, the twist subgroup is generated by the four elements $T$, $A_1A_2^{-1}$, $F_{g-2}U_3^{-1}$ and $B_{2k}B_{2k+1}^{-1}$.
\end{theorem}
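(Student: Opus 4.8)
The plan is to run a hybrid of the arguments of Theorems~\ref{t9odd}, \ref{t4k+3} and~\ref{t4k+2.10}. Write $g=2r+1$ with $r=2k+1\ge 3$, and let $L\le\mathcal{T}_g$ be the subgroup generated by the four listed elements. Here $T=\rho_2\rho_1$ has period $4k+1$ on the cycling crosscaps, so, exactly as in the proof of Theorem~\ref{t4k+3}, conjugation by powers of $T$ walks along the chain $a_1\to b_1\to c_1\to b_2\to\cdots$ only as far as $b_{2k}$ and cyclically permutes the curves $u_j$ with period $4k+1$. By Theorem~\ref{t1}(1) it therefore suffices to show that $B_1B_2^{-1}\in L$; then the proof of Theorem~\ref{t1} supplies $A_1,A_2$ and $B_i,C_i$ for $i\le 2k-1$ (hence also $B_{2k}=TC_{2k-1}T^{-1}$ up to inversion), and it remains only to produce the missing Dehn twists $E$, $C_{2k}=U_{4k+1}$ and $B_{2k+1}$.

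The pair $A_1A_2^{-1}$, $F_{g-2}U_3^{-1}$ is designed to play, for small $g$, the role of the single compound element $\Gamma_{10}C_2^{-1}F_{18}U_{33}^{-1}$ in Theorem~\ref{t4k+3}. First I would produce, from $F_{g-2}U_3^{-1}$ and its $T$-conjugates (using that the $f$- and $u$-chains are cyclic of period $4k+1$), the relations $U_iU_j^{-1}\in L$ for all $i,j$, exactly as $U_{i+1}U_i^{-1}$ is obtained in the proof of Theorem~\ref{t4k+3}; the fourth generator $B_{2k}B_{2k+1}^{-1}$ enters here, bridging the $u$-chain (which contains $c_{2k}=u_{4k+1}$) to the $b$-chain (via $b_{2k+1}$, which is adjacent to $b_{2k}$). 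With these $u$-index corrections available, I would then peel $F_{g-2}U_3^{-1}$ against $A_1A_2^{-1}$ and its $T$-conjugates to extract, successively, elements of the form $F_iA_1^{-1}$, $F_iF_j^{-1}$, $B_\ell A_1^{-1}$, $B_\ell A_2^{-1}$, and finally $B_1\Gamma_{g-1}^{-1}$ (via $T^{-2}(b_2,a_2)=(b_1,\gamma_{g-1})$ as in Theorem~\ref{t9odd}), so that $B_1B_2^{-1}\in L$ follows precisely as in the last lines of the proofs of Theorems~\ref{t9odd} and~\ref{t4k+3}. At this point the proof of Theorem~\ref{t1} gives $A_1,A_2,B_1,\dots,B_{2k},C_1,\dots,C_{2k-1}\in L$.

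For the end game: choosing a recovered twist $B_m$ with $b_m$ meeting $u_3$ once and disjoint from $f_{g-2}$, the element $(F_{g-2}U_3^{-1})^{-1}B_m$ carries $(f_{g-2},u_3)$ to $(f_{g-2},b_m)$, so $F_{g-2}B_m^{-1}\in L$ and hence $F_{g-2}\in L$; then $U_3=(F_{g-2}U_3^{-1})^{-1}F_{g-2}\in L$. Conjugating $U_3$ by $T^{4k-2}$ yields $U_{4k+1}=C_{2k}\in L$; a $T$-conjugate of $F_{g-2}$ equals $F_1^{\pm1}$, so $F_1\in L$ and $E=A_1F_1A_1^{-1}\in L$; and since $B_{2k}\in L$, the fourth generator gives $B_{2k+1}=(B_{2k}B_{2k+1}^{-1})^{-1}B_{2k}\in L$. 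Thus $L$ contains Omori's generating set of Theorem~\ref{thm1}, so $L=\mathcal{T}_g$.

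The one real difficulty I foresee is the geometric bookkeeping in the small range $g\ge7$: one must verify directly from Figure~\ref{Ck+3} that the particular $T$-conjugates of $f_{g-2}$, $u_3$, $b_{2k}$ and $b_{2k+1}$ occurring in the peeling moves are disjoint from, or meet in exactly one point, the curves $a_1,a_2,b_i,c_i$ used at each stage, so that every conjugation collapses to a single Dehn twist, and that for the smallest values of $k$ no two curves appearing in a product accidentally coincide. Pinning down the exact powers of $T$ used at each step — the analogue of the integers $-3,\pm4,+5,-7,-17,+11$ in Theorems~\ref{t9odd}, \ref{t42} and~\ref{t4k+3} — is then routine.
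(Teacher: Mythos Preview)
Your overall plan is close to the paper's, but there is a genuine gap in the end game. You claim that ``a $T$-conjugate of $F_{g-2}$ equals $F_1^{\pm1}$'', and this is precisely where the partial rotation bites: in the model of Figures~\ref{R4K+3} and~\ref{Ck+3} the rotation $T$ has order $4k+1$ and fixes the last two crosscaps, and the curve $f_{g-2}$ sits in the region involving those fixed crosscaps, so it is not carried to $f_1$ by any power of $T$. In the paper the authors do not (and cannot) use a power of $T$ for this step; instead they build by hand the element
\[
\varphi \;=\; T^{3}C_{2k}B_{2k+1}C_{2k-1}B_{2k}C_{2k}U_{g-4}^{-1}T^{-2},
\]
verify geometrically that $\varphi(f_{g-2})=f_{1}$, and only then conclude $F_1=\varphi F_{g-2}\varphi^{-1}\in L$ and $E=A_1F_1A_1^{-1}\in L$. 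Note that $\varphi$ uses $C_{2k}$ and $B_{2k+1}$, which are exactly the twists \emph{not} provided by the proof of Theorem~\ref{t1} and which must therefore be recovered \emph{before} one can reach $E$; your proposed ordering (get $F_1$ and $E$ first, then $C_{2k}$ and $B_{2k+1}$) is backwards for this reason.

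As a secondary remark, your route to $B_1B_2^{-1}$ via first establishing $U_iU_j^{-1}\in L$ for all $i,j$ (imitating Theorem~\ref{t4k+3}) is more indirect than the paper's, and again leans on the unjustified assertion that the $f$-chain is cyclic of period $4k+1$ under this $T$. The paper instead uses two direct peeling moves: $(F_{g-2}U_3^{-1})(A_1A_2^{-1})$ carries $(f_{g-2},u_3)$ to $(f_{g-2},a_2)$, yielding $F_{g-2}A_2^{-1}\in L$ and hence $U_3A_1^{-1}\in L$; then $(B_{2k}B_{2k+1}^{-1})(A_1U_3^{-1})$ carries $(b_{2k},b_{2k+1})$ to $(b_{2k},u_3)$, yielding $B_{2k}U_3^{-1}\in L$ and hence $B_{2k}A_1^{-1}\in L$; finally $T^{3}(b_{2k},a_1)=(b_1,b_2)$ gives $B_1B_2^{-1}\in L$. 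No $U_iU_j^{-1}$ relations are needed at all.
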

\begin{proof}
Let $L$ be the subgroup of $\mathcal{T}_g$ generated by the elements $T$, $A_1A_2^{-1}$, $F_{g-2}U_3^{-1}$ and $B_{2k}B_{2k+1}^{-1}$. It can be verified that the diffeomorphism $(F_{g-2}U_3^{-1})(A_1A_2^{-1})$ takes the curves $(f_{g-2},u_3)$ to $(f_{g-2},a_2)$. This implies that 
$F_{g-2}A_2^{-1} \in L$. Hence we get the elements 
\begin{eqnarray*}
A_1F_{g-2}^{-1}&=&(A_1A_2^{-1})(A_2 F_{g-2}^{-1})\\
U_3A_1^{-1}&=&(U_3F_{g-2}^{-1})(F_{g-2}A_1^{-1})
\end{eqnarray*}
are contained in  $L$. Also, the subgroup $L$ contains the element $(B_{2k}B_{2k+1}^{-1})(A_1U_3^{-1})$ sending the curves $(b_{2k},b_{2k+1})$ to $(b_{2k},u_3)$. Hence we get 
$B_{2k}U_3^{-1} \in L$.
From this, we also have the elements
\begin{eqnarray*}
B_{2k}A_1^{-1}&=&(B_{2k}U_3^{-1})(U_3A_1^{-1}) \in L \textrm{ and}\\
B_1B_2^{-1}&=&T^3(B_{2k}A_1^{-1})T^{-3} \in L
\end{eqnarray*}
by the fact that $T^3(b_{2k},a_1)=(b_1,b_2)$. Since the elements $T$, $A_1A_2^{-1}$ and $B_1B_2^{-1}$ are in $L$, one can conclude that the generators $A_1$, $A_2$, $B_1, \ldots,B_{2k}$ and $C_1,\ldots,C_{2k-1}$ belong to $L$ by the proof of Theorem~\ref{t1}. Thus, we have  
\begin{eqnarray*}
U_3&=&(U_3B_{2k}^{-1})(B_{2k})\in L,\\
B_{2k+1}&=&(B_{2k+1}B_{2k}^{-1})(B_{2k})\in L
\end{eqnarray*}
and since $T^{-3}(u_3)=c_{2k}$, the element
\[
C_{2k}=T^{-3}U_3T^{3} \in L.
\]
It remains to prove that the subgroup $L$ contains the Dehn twist $E$. It is easy to obtained that 
\[
F_{g-2}=(F_{g-2}U_3^{-1})(U_3) \in L.
\]
Let $\varphi$ be the diffeomorphism
\[
T^{3}C_{2k}B_{2k+1}C_{2k-1}B_{2k}C_{2k}U_{g-4}^{-1}T^{-2},
\]
which is contained in the subgroup $L$. It can be verified that $\varphi$ takes the curve $f_{g-2}$ to $f_{1}$.
Therefore, we have 
\[
F_1=\varphi F_{g-2}\varphi^{-1}\in L.
\]
This finishes the proof since $E=A_1F_1A_1^{-1}$ is contained in $L$.
\end{proof}

We now ready to give the main theorem of this section.
\begin{theorem}\label{tcom}
The twist subgroup $\mathcal{T}_g$ is generated by
\begin{enumerate}
\item[(1)] two commutators if $g=4k\geq44$ or $g=4k+1\geq29$ and
\item[(2)] three commutators if $g=4k+2\geq30$ or $g=4k+3\geq43$.
\end{enumerate}
\end{theorem}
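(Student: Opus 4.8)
The plan is to combine the explicit generating sets produced in Section~\ref{S3} and earlier in Section~\ref{S4} with the commutator identity of Proposition~\ref{prop}, handling the four residue classes of $g$ modulo $4$ separately. In every case the rotation $T$ is already written as a single commutator by Proposition~\ref{prop} (with $\rho_1$ in the appropriate model from Figures~\ref{R4K}--\ref{R4K+3}, all of which lie in $\mathcal{T}_g$ since $D(\rho_i)=1$), so the only remaining work is to express each of the other generators as a commutator.

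For the two ``efficient'' cases I would argue as follows. If $g=4k+1\geq29$, Theorem~\ref{t29} gives $\mathcal{T}_g=\langle T,\, G_1\rangle$ with $G_1=\Gamma_{10}C_2^{-1}F_{18}C_{12}^{-1}$; since $T=[T^{2k+1},\rho_1]$, it suffices to exhibit $G_1$ as a single commutator. Here I would use the standard lantern-type or chain-relation trick already implicit in the Baykur--Korkmaz approach: write a product of Dehn twists of the form $\Gamma_{10}C_2^{-1}F_{18}C_{12}^{-1}$ as $[\alpha,\beta]$ by choosing $\alpha$ a Dehn twist (or short product) and $\beta$ a mapping class moving the relevant curves appropriately, using that a product $XY^{-1}$ of twists about curves of the same topological type is $fXf^{-1}\cdot X^{-1}=[f,X]$ whenever $f(x)=$ (the curve for) $Y$. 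Since $G_1$ is a product of two such ``difference of twists'' blocks on disjoint pieces of the surface, one can realize it as a commutator $[\varphi, \Psi]$ with $\varphi$ built from twists and $\Psi$ a suitable push/rotation supported where needed. The case $g=4k\geq44$ is the same: Theorem~\ref{t42} gives $\mathcal{T}_g=\langle T, H_1\rangle$ with $H_1=\Gamma_{10}C_2^{-1}F_{18}D_{33}^{-1}$, again a product of two such blocks, and $T=[T^{2k},\rho_1]$.

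For the two remaining cases I would instead start from the three-element generating sets of Theorems~\ref{t4k+2} and~\ref{t4k+3}. If $g=4k+2\geq30$, Theorem~\ref{t4k+2} gives $\mathcal{T}_g=\langle T,\, \Gamma_{10}C_2^{-1}F_{18}B_{2k}^{-1},\, C_{2k-1}D_{4k+1}^{-1}\rangle$; here $T$ is one commutator by Proposition~\ref{prop}(1), the second generator is again a product of two ``difference of twists'' blocks and hence a single commutator by the same device, and the third generator $C_{2k-1}D_{4k+1}^{-1}$ is literally $fC_{2k-1}f^{-1}\cdot C_{2k-1}^{-1}=[f,C_{2k-1}]$ for any $f$ with $f(c_{2k-1})=d_{4k+1}$ (such an $f$ exists since both curves are two-sided nonseparating of the appropriate type), giving three commutators total. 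If $g=4k+3\geq43$, one argues identically using Theorem~\ref{t4k+3}, with $B_{2k+1}A_1^{-1}=[f,A_1]$ for $f(a_1)=b_{2k+1}$. Finally the numerical bounds ($g\geq44$, $29$, $30$, $43$) are exactly the ranges in which the cited generating-set theorems apply, so no further bookkeeping is needed.

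The main obstacle is the middle step in each case: verifying that the specific length-four product of Dehn twists appearing in Theorems~\ref{t29}, \ref{t42}, \ref{t4k+2}, \ref{t4k+3} really is a \emph{single} commutator, i.e.\ choosing the auxiliary mapping class $\varphi$ (and checking it lands in $\mathcal{T}_g$ via the $D$-homomorphism of Lemma~\ref{lem1}) so that one conjugate of $\varphi$ by another element realizes the product on the nose. This amounts to finding a mapping class simultaneously sending $\gamma_{10}\mapsto c_2$ and $f_{18}\mapsto c_{12}$ (respectively $f_{18}\mapsto d_{33}$, $f_{18}\mapsto b_{2k}$, $f_{18}\mapsto u_{33}$) while being compatible with the twist normalizations — a routine but careful ``change of coordinates'' argument on the explicit surface models of Figures~\ref{G}, \ref{Ck+2}, \ref{Ck+3}. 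Everything else is a direct substitution into the already-established generating sets.
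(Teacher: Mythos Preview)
Your proposal is correct and follows essentially the same route as the paper: in each residue class you invoke the appropriate generating-set theorem (\ref{t29}, \ref{t42}, \ref{t4k+2}, \ref{t4k+3}), use Proposition~\ref{prop} to write $T$ as a commutator, and handle the remaining generator(s) by a change-of-coordinates argument. Two small remarks: the identity you want is $XY^{-1}=X(\psi X\psi^{-1})^{-1}=[X,\psi]$ for $\psi(x)=y$ (your formula $fXf^{-1}X^{-1}$ gives $YX^{-1}$, the inverse, which is harmless but worth getting right), and the ``lantern-type or chain-relation trick'' language is a red herring --- no such relation is used; the paper simply observes that since $(\gamma_{10},f_{18})$ and, say, $(d_{33},c_2)$ are pairs of disjoint two-sided nonseparating curves of the same topological type, there is a product $\phi$ of Dehn twists (hence in $\mathcal{T}_g$) carrying one pair to the other, whence $\Gamma_{10}C_2^{-1}F_{18}D_{33}^{-1}=\Gamma_{10}F_{18}(\phi\,\Gamma_{10}F_{18}\,\phi^{-1})^{-1}=[\Gamma_{10}F_{18},\phi]$.
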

\begin{proof}
We will prove our results in four cases.\\

\underline{$g=4k\geq44$}: By Theorem~\ref{t42}, $\mathcal{T}_g$ is generated by the elements $T$ and $\Gamma_{10}C_2^{-1}F_{18}D_{33}^{-1}$, where the rotation $T=\rho_2\rho_1$ is depicted in Figure~\ref{R4K}. By Proposition~\ref{prop}, $T$ equals to a single commutator in $\mathcal{T}_g$. On the other hand, it is clear that there is a diffeomorphism $\phi$ that can be chosen as a product of Dehn twists such that it maps the curves $(\gamma_{10},f_{18})$ to the curves $(d_{33}, c_2)$, respectively. Hence
\begin{eqnarray*}
\Gamma_{10}C_2^{-1}F_{18}D_{33}^{-1}&=&\Gamma_{10}F_{18}(D_{33}C_2)^{-1}\\
&=&\Gamma_{10}F_{18}(\phi \Gamma_{10}F_{18}\phi^{-1})^{-1}\\
&=&(\Gamma_{10}F_{18}\phi)(\Gamma_{10}F_{18})^{-1}\phi^{-1}\\
&=&[\Gamma_{10}F_{18},\phi].
\end{eqnarray*}\\
\underline{$g=4k+1\geq29$}: Theorem~\ref{t29} implies that $\mathcal{T}_g$ is generated by the elements $T$ and $\Gamma_{10}C_2^{-1}F_{18}C_{12}^{-1}$. The rotation $T=\rho_2\rho_1$ in Figure~\ref{R4K+1} is a single commutator by Proposition~\ref{prop}. Also, it is clear that there is a diffeomorphism $\varphi$ contained in $\mathcal{T}_g$ so that it sends  $(\gamma_{10},f_{18})$ to $(c_{12}, c_2)$. By a similar argument as above, we get
\[
\Gamma_{10}C_2^{-1}F_{18}C_{12}^{-1}=[\Gamma_{10}F_{18},\varphi].
\]
This proves our sharpest result $(1)$.\\

\underline{$g=4k+2\geq30$}: In this case, $\mathcal{T}_g$ is generated by the elements $T$, $\Gamma_{10}C_2^{-1}F_{18}B_{2k}^{-1}$ and $C_{2k-1}D_{4k+1}^{-1}$ by Theorem~\ref{t4k+2}. The rotation $T=\rho_2\rho_1$ in Figure~\ref{R4K+3} is a single commutator by Proposition~\ref{prop}. Also, it is easy to verify that there exist diffeomorphisms $\psi_1$ and $\psi_2$ in $\mathcal{T}_g$ such that $\psi_1$ sends  $(\gamma_{10},f_{18})$ to $(b_{2k}, c_2)$ and $\psi_2$ sends $c_{2k-1}$ to $d_{4k+1}$. Using again a very similar argument as above, we get
\[
\Gamma_{10}C_2^{-1}F_{18}B_{2k}^{-1}=[\Gamma_{10}F_{18},\psi_1],
\]
and also
\begin{eqnarray*}
C_{2k-1}D_{4k+1}^{-1}&=&C_{2k-1}(\psi_{2}C_{2k-1}\psi_{2}^{-1})^{-1}\\
&=&[C_{2k-1},\psi_2].
\end{eqnarray*}\\
\underline{$g=4k+3\geq43$}: The twist subgroup $\mathcal{T}_g$ is generated by the elements $T$, $\Gamma_{10}C_2^{-1}F_{18}U_{33}^{-1}$ and $B_{2k+1}A_{1}^{-1}$ 
by Theorem~\ref{t4k+3}. Once again $T=\rho_2\rho_1$ in Figure~\ref{R4K+3} is a commutator by Proposition~\ref{prop} and so are the elements 
$\Gamma_{10}C_2^{-1}F_{18}U_{33}^{-1}$ and $B_{2k+1}A_{1}^{-1}$ by the same argument above.
\end{proof}

If one wants to decrease $g$, then again the number of commutator generators increases. In the remaining parts, we try to find minimal number of commutator generators for $g\geq7$.

\begin{theorem}\label{tcom1}
The twist subgroup $\mathcal{T}_g$ is generated by
\begin{enumerate}
\item[(1)] three commutators if $g=4k\geq8$ or $g=4k+1\geq9$ and
\item[(2)] four commutators if $g=4k+2\geq10$ or $g=4k+3\geq7$.
\end{enumerate}
\end{theorem}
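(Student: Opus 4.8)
The plan is to treat each of the four congruence classes of $g$ separately, parallel to the structure of Theorem~\ref{tcom}, but now invoking the three- and four-element generating sets of Theorems~\ref{t9odd}, \ref{t8even}, \ref{t4k+2.10} and \ref{t4k+3.7} instead of the two- and three-element sets of Section~\ref{S3}. In every case the first generator is the rotation $T=\rho_2\rho_1$, which by Proposition~\ref{prop} is already a single commutator $[T^{2k},\rho_1]$ or $[T^{2k+1},\rho_1]$, so $T$ costs us nothing. The remaining generators in those theorems all have the shape $X_1 X_2^{-1}$ with $X_1,X_2$ Dehn twists about curves of the same topological type, or products of two such expressions; the recurring trick is that if $\phi$ is any mapping class (which we may take to be a product of Dehn twists, hence in $\mathcal{T}_g$) with $\phi(x_1)=x_2$, then $X_1 X_2^{-1} = X_1(\phi X_1\phi^{-1})^{-1}=[X_1,\phi]$, and similarly a four-twist word $Y_1 Y_2^{-1} Z_1 Z_2^{-1}$ becomes $[Y_1 Z_1,\psi]$ whenever $\psi$ carries the pair $(y_1,z_1)$ to $(z_2,y_2)$. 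So the substance of the proof is just exhibiting such $\phi$ or $\psi$ for each non-$T$ generator.

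Concretely: for $g=4k\geq 8$, Theorem~\ref{t8even} gives the generators $T$, $D_{g-1}A_2^{-1}$ and $F_1 B_2^{-1}$; since $d_{g-1}$ and $a_2$ are both one-sided (so their twists are conjugate in $\mathcal{T}_g$ — or, if one prefers to stay with two-sided curves, one replaces $D_{g-1}A_2^{-1}$ by an equivalent expression as in the earlier theorems), and $f_1,b_2$ are of the same type, each of these two generators is a commutator, giving three commutators total. For $g=4k+1\geq 9$, Theorem~\ref{t9odd} gives $T$, $A_1A_2^{-1}$ and $F_1 B_2^{-1}$; here $A_1A_2^{-1}=[A_1,\phi]$ for any product of twists $\phi$ with $\phi(a_1)=a_2$ (one exists, e.g.\ a suitable power of $T$ composed with an interchanging map), and likewise $F_1B_2^{-1}$, so again three commutators. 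For $g=4k+2\geq 10$, Theorem~\ref{t4k+2.10} gives the four elements $T$, $A_1A_2^{-1}$, $F_1B_{2k}^{-1}$ and $C_{2k-1}D_{4k+1}^{-1}$; the last is a commutator because $c_{2k-1}$ and $d_{4k+1}$ are both one-sided, so with $T$ a commutator this yields four commutators. For $g=4k+3\geq 7$, Theorem~\ref{t4k+3.7} gives $T$, $A_1A_2^{-1}$, $F_{g-2}U_3^{-1}$ and $B_{2k}B_{2k+1}^{-1}$, each of the last three being a commutator by the same curve-matching argument, for four commutators.

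The only genuinely non-formal point — and the step I expect to be the main obstacle — is verifying in each case the existence of the connecting diffeomorphism $\phi$ (or $\psi$) as a \emph{product of Dehn twists}, i.e.\ inside $\mathcal{T}_g$ rather than merely inside $\mod(N_g)$; one must check that the source and target curves lie in the same $\mathcal{T}_g$-orbit of the complexity-of-complement type, which for pairs requires matching not just the individual curves but also the topology and the relative position (e.g.\ whether the pair is disjoint, and the genus/orientability of the complementary pieces). For the one-sided curves $d_{g-1}$, $a_2$, $c_{2k-1}$, $d_{4k+1}$, $u_3$ this is the classical change-of-coordinates principle for nonorientable surfaces, and for the two-sided curves $f_1$, $b_2$, $b_{2k}$, $f_{g-2}$, $b_{2k+1}$, $\gamma_{10}$, etc.\ one reads the configurations off Figures~\ref{G}, \ref{Ck+2}, \ref{Ck+3} and checks the complements agree; explicit such $\phi$ can in fact be built out of the twists already shown to lie in the relevant subgroup during the proofs of Theorems~\ref{t8even}, \ref{t9odd}, \ref{t4k+2.10}, \ref{t4k+3.7}. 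Once these matchings are in hand, the commutator identities above are purely algebraic and the counts $3,3,4,4$ follow immediately, completing the proof; the corresponding statement for the symplectic quotients then follows, as before, by pushing forward along the epimorphism $\mathcal{T}_g\to \Sp$.
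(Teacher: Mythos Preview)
Your approach matches the paper's exactly: invoke the three- and four-element generating sets of Theorems~\ref{t8even}, \ref{t9odd}, \ref{t4k+2.10}, \ref{t4k+3.7}, express $T$ as a commutator via Proposition~\ref{prop}, and write each remaining generator $X_1X_2^{-1}$ as $[X_1,\phi]$ for a suitable $\phi\in\mathcal{T}_g$ with $\phi(x_1)=x_2$. The paper's own proof is in fact terser than yours---it simply refers back, case by case, to the argument of Theorem~\ref{tcom}.

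One point does need correcting. You repeatedly call $a_2$, $d_{g-1}$, $c_{2k-1}$, $d_{4k+1}$, $u_3$ \emph{one-sided} curves; they are not. Dehn twists on a nonorientable surface are only defined about two-sided simple closed curves, and every curve appearing in these generating sets (the $a_i,b_i,c_i,d_i,f_i,u_i,\gamma_i$) is two-sided. The reason the commutator trick works is not that the curves are one-sided but that, in each pair, the two curves are nonseparating two-sided curves whose complements in $N_g$ are homeomorphic (and with compatible local orientations), so the change-of-coordinates principle produces a $\phi$ that can be taken as a product of Dehn twists, hence in $\mathcal{T}_g$. Once you replace the ``one-sided'' justification with this, your argument goes through unchanged and coincides with the paper's.
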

\begin{proof}
The proof is very similar to the proof of Theorem~\ref{tcom}.\\

\underline{$g=4k\geq 8$}: By Theorem~\ref{t8even}, $\mathcal{T}_g$ is generated by the elements $T$, $D_{g-1}A_{2}^{-1}$ and $F_1B_{2}^{-1}$, where the rotation $T=\rho_2\rho_1$ is depicted in Figure~\ref{R4K}. By the argument similar to that in the proof of Theorem~\ref{tcom}, each generator can be expressed as a single commutator.

\underline{$g=4k+1\geq9$}: Theorem~\ref{t9odd} implies that $\mathcal{T}_g$ is generated by the elements $T$ in Figure~\ref{R4K+1}, $A_1A_2^{-1}$ and $F_1B_2^{-1}$. We now apply the same argument above, to express each generator as a commutator in $\mathcal{T}_g$,
which proves the result $(1)$.

\underline{$g=4k+2\geq10$}: In this case, using the generating set for $\mathcal{T}_g$ in Theorem~\ref{t4k+2.10} and the similar argument in the proof of above theorem, one can prove that the rotation $T=\rho_2\rho_1$ in Figure~\ref{R4K+3} and each generator can be written as a single commutator. 

\underline{$g=4k+3\geq7$}: In this last case, we use the generating set given in Theorem~\ref{t4k+3.7}. Each generator can be written as a single commutator using similar argument above, where the first generator $T$ is shown in Figure~\ref{R4K+3}.
\end{proof}

\end{document}